\newtheorem{theorem}{Theorem}[section]
\newtheorem{lemma}[theorem]{Lemma}
\newtheorem{corollary}[theorem]{Corollary}
\theoremstyle{definition}
\newtheorem{example}[theorem]{Example}
\theoremstyle{remark}
\newtheorem{remark}[theorem]{Remark}
\numberwithin{equation}{section}
\everymath\expandafter{\the\everymath \displaystyle}
\begin{document}

\allowdisplaybreaks 
\setcounter{page}{1}

\title[Sharp operator mean inequalities of the numerical radii]{Sharp  operator mean inequalities of the numerical radii}
\author [H. Jafarmanesh and M. Khosravi] {Hosna Jafarmanesh and Maryam Khosravi} 
\address{Hosna Jafarmanesh: Department of Mathematics and Computer Sciences,
Hakim Sabzevari University, Sabzevar, P.O. Box 397, Iran.}
\email{\textrm{Hosna.jafarmanesh@yahoo.com}}
\address{Maryam Khosravi: Department of Pure Mathematics, Faculty of Mathematics and Computer, Shahid Bahonar  University of Kerman, Kerman, Iran.}
\email{\textrm{khosravi@uk.ac.ir}}
\subjclass[2010]{Primary: 47A63; Secondary: 47A64}
\keywords{Numerical radius, operator norm, inequality, refine}.
\begin{abstract}
We present several sharp upper bounds and some extension for product operators. Among other inequalities, it is shown that if $ 0<mI \leq B^{*}f^{2}(|X|)B$, $A^{*}g^{2}(|X^{*}|)A \leq MI$, $f, g$ are non-negative continuous functions on $[0, \infty)$ such that $f(t)g(t)=t$, $(t\geq 0)$, then for all non-negative operator monotone decreasing function $h$ on $[0, \infty)$, we obtain that
\begin{align*}
\left\|h\big(B^{*}f^{2}(|X|)B\big) \sigma h\big(A^{*}g^{2}(|X^{*}|)A\big)\right\| \leq \frac{m k}{M}  h\left(\left| \left\langle (A^{*}XB)x,x\right\rangle \right|\right),
\end{align*}
As an application of the above inequality, it is shown that
\begin{align*}
\omega\big(A^{*}XB\big) \leq  \frac{m k}{M} \left\|B^{*}f^{2}(|X|)B  !  A^{*}g^{2}(|X^{*}|)A\right\|,
\end{align*}
where, $k=\dfrac{(M+m)^{2}}{4mM}$ and $! \leq \sigma \leq \bigtriangledown$. 
\end{abstract}

\maketitle

\section{Introduction}
Let $\mathcal{B}(\mathcal{H})$ denote the $C^{*}$-algebra of all bounded linear operators on a complex Hilbert space $\mathcal{H}$. An operator $A\in \mathcal{B}(\mathcal{H})$ is called positive if $\langle Ax,x\rangle\geq 0$ for all $x\in \mathcal{H}$. We write $A\geq 0$ if $A$ is positive.\\ 
A continuous real-valued function $f$ defined on interval $J$ is said to be operator monotone increasing (decreasing) if for every two positive operators $A$ and $B$ with spectral in $J$, the inequality $A \leq B$ implies $f(A) \leq f(B)$ ($f(A) \geq f(B)$), respectively. As an example, it is well known that the power function $x^{p}$ on $(0,\infty)$ is operator monotone increasing if $p\in[0,1]$ and operator monotone decreasing if and only if $p\in[-1, 0]$.\\ 
If $f:J \to \mathbb{R}$ is a convex function and $A$ is a self-adjoint operator with spectrum in $J$, then
\begin{align} \label{eq 002}
           f\big(\langle Ax,x\rangle\big) \leq \langle f(A)x,x\rangle.
\end{align} 
for each $x\in\mathcal{H}$ with $\|x\|=1$, and the reverse inequality holds if $f$ is concave (see \cite{Mond}).\\
 The spectral radius and the numerical radius of $A\in \mathcal{B}(\mathcal{H})$ are defined by $r(A)=\sup\{ |\lambda| ~:~\lambda\in sp(A)\}$ and
\[\omega(A)=\sup\{ |\langle Ax,x\rangle |~:~x\in H,~\|x\|=1\},\]
respectively. It is well-known that $r(A) \leq \omega(A)$ and $\omega(.)$ defines a norm on $\mathcal{B}(\mathcal{H})$, which is equivalent to the usual operator norm $\|.\|$.\\ In fact, for any $ A\in \mathcal{B}(\mathcal{H})$,  
\begin{equation}\label{eq1.1}
\dfrac{1}{2}\|A\|\leq \omega(A)\leq\|A\|.
\end{equation}  
Kittaneh \cite{Kittaneh 2} has shown that for $A\in\mathcal{B}(\mathcal{H})$,
\begin{align} \label{eq kit2}
\omega^{2}(A) \leq \frac{1}{2} \||A|^{2} + |A^{*}|^{2}\|,
\end{align}
which is a refinement of right hand side of inequality \eqref{eq1.1}.\\
 Dragomir \cite{Dragomir} proved that for any $A, B \in \mathcal{B}(\mathcal{H})$ and for all $p\geq 1$,
\begin{align} \label{eq dra04}
\omega^{p}(B^{*}A) \leq \frac{1}{2} \|(A^{*}A)^{p} + (B^{*}B)^{p}\|.
\end{align}
In \cite{Sattari}, it has been shown that if $A, B \in \mathcal{B}(\mathcal{H})$ and $p\geq 1$, then
\begin{align} \label{Sattari}
\omega^{p} (B^{*} A) \leq \frac{1}{4} \| (A A^{*})^{p} +(B B^{*})^{p}\| +\frac{1}{2} \omega^{p}(A B^{*}),
\end{align}
which is generalization of inequality \eqref{eq dra04} and in  particular cases is sharper than this inequality.
Shebrawi et al. \cite{Shebrawi} generalized inequalities \eqref{eq kit2} and \eqref{eq dra04}, as follows:\\
If $A, B, X \in \mathcal{B}(\mathcal{H})$ and $p\geq1$, we have 
\begin{align} \label{Shebrawi}
\omega^{p} (A^{*} X B) \leq \frac{1}{2} \| (A^{*} |X^{*}| A)^{p} + (B^{*} |X| B)^{p} \|.
\end{align}

In this paper, we first derive a new lower bound for inner-product of products $A^{*}XB$ involving operator monotone decreasing function, and, so we give refinement of the inequalities \eqref{eq dra04} and \eqref{Shebrawi}. We prove a numerical radius, which is similar to \eqref{Sattari} in some example is sharper than \eqref{Sattari}.\\
 In particular, we extend inequality \eqref{Sattari} and also find some example which show that is a refinement of \eqref{Shebrawi}. In the next, we present numerical radius inequalities for products of operators, which one of the applications of our results is a generalization of \eqref{eq kit2}.

\section{Main results}\label{se2}

We first recall that for positive invertible operators $A, B \in \mathcal{B}(\mathcal{H})$, the weighted operator arithmetic and harmonic means are defined, by \\
\[A \bigtriangledown_{\nu} B = (1-\nu) A + \nu B \] and \[A !_{\nu} B = \big((1-\nu) A^{-1} + \nu B^{-1})\big)^{-1}.\] \\
It is well-known that if $\sigma_{\nu}$ is a symetric operator mean, then
\begin{align*} 
A !_{\nu} B \leq A \sigma_{\nu} B \leq A \bigtriangledown_{\nu} B. 
 \end{align*}

To prove our numerical radius inequalities, we need several  lemmas.

\begin{lemma} \cite{Kittaneh 0} \label{le k2}
 If $A\in B(\mathcal{H})$ and $f, g$ are non-negative continuous functions on $[0, \infty)$ satisfying $f(t)g(t)=t$, $(t\geq 0)$, then for each $x, y\in \mathcal{H}$ 
\begin{align*}
    |\langle Ax,y\rangle| \leq \|f(|A|)x\|  \|g(|A^{*}|)y\|.   
\end{align*}
\end{lemma}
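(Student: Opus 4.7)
\textbf{Proof plan for Lemma \ref{le k2}.} The plan is to exploit the polar decomposition of $A$, together with continuous functional calculus and the Cauchy--Schwarz inequality.

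First, I would write $A = U |A|$, where $U \in \mathcal{B}(\mathcal{H})$ is the partial isometry in the polar decomposition, with $\ker U = \ker |A| = \ker A$. Since $f(|A|)$ and $g(|A|)$ are continuous functions of the same positive operator, they commute, and the scalar identity $f(t)g(t)=t$ yields the operator identity $f(|A|)g(|A|) = |A|$. Substituting this into $\langle Ax,y\rangle$ and shifting one of the factors to the second slot of the inner product gives
\begin{equation*}
\langle Ax, y \rangle \;=\; \langle U g(|A|) f(|A|) x, y \rangle \;=\; \langle f(|A|) x, \, g(|A|) U^{*} y \rangle .
\end{equation*}

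Next, I would apply the Cauchy--Schwarz inequality to the right-hand side to obtain
\begin{equation*}
|\langle Ax, y \rangle| \;\leq\; \|f(|A|) x\| \cdot \|g(|A|) U^{*} y\|,
\end{equation*}
so it remains only to identify the second factor with $\|g(|A^{*}|)y\|$. Starting from the basic relation $|A^{*}|^{2} = AA^{*} = U|A|^{2}U^{*}$, and using that the range projection $U^{*}U$ acts as the identity on $\overline{\operatorname{ran}|A|}$ so that $|A|^{2}(U^{*}U) = |A|^{2}$, one obtains by induction $U|A|^{2k}U^{*} = |A^{*}|^{2k}$ for every $k \geq 1$. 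Passing through Stone--Weierstrass and the continuous functional calculus yields $U g^{2}(|A|) U^{*} = g^{2}(|A^{*}|)$, from which
\begin{equation*}
\|g(|A|) U^{*} y\|^{2} \;=\; \langle U g^{2}(|A|) U^{*} y, y \rangle \;=\; \langle g^{2}(|A^{*}|) y, y \rangle \;=\; \|g(|A^{*}|) y\|^{2}.
\end{equation*}
Chaining the three displays gives the stated bound.

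The main technical obstacle is the last step: the polynomial identity $Up(|A|^{2})U^{*} = p(|A^{*}|^{2})$ is only automatic when $p(0)=0$, so extending it to arbitrary continuous $g$ requires some care. If $g(0)\neq 0$, then $f(0)g(0)=0$ forces $f(0)=0$, and one can either swap the roles of $f$ and $g$ or absorb the constant part into a correction supported on $\ker A^{*}$ before taking limits in the Stone--Weierstrass approximation. Every other step is a routine application of Cauchy--Schwarz and functional calculus.
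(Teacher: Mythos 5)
The paper states this lemma without proof, importing it verbatim from the cited reference \cite{Kittaneh 0}, so there is no internal proof to compare against; your argument is in substance the standard one from that reference: polar decomposition $A=U|A|$, the operator identity $f(|A|)g(|A|)=|A|$, and Cauchy--Schwarz applied to $\langle Ax,y\rangle=\langle f(|A|)x,\,g(|A|)U^{*}y\rangle$. One correction and one simplification. The correction: the step you rightly flag is not just delicate, your claimed \emph{equality} $\|g(|A|)U^{*}y\|=\|g(|A^{*}|)y\|$ is false in general, because for $g(0)\neq 0$ one has $U g^{2}(|A|) U^{*}=g^{2}(|A^{*}|)-g^{2}(0)\,(I-UU^{*})$, where $I-UU^{*}$ is the projection onto $\ker A^{*}$; hence $\|g(|A|)U^{*}y\|^{2}=\|g(|A^{*}|)y\|^{2}-g^{2}(0)\,\|(I-UU^{*})y\|^{2}\leq\|g(|A^{*}|)y\|^{2}$, an inequality rather than an equality --- but an inequality in exactly the direction you need, so your ``correction supported on $\ker A^{*}$'' fix is sound, as is your alternative of passing to $A^{*}$ with $f,g$ swapped (since $f(0)g(0)=0$ forces $f(0)=0$ when $g(0)\neq 0$). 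The simplification: both patches are unnecessary if you replace the conjugation identity by the one-sided intertwining relation $g(|A|)U^{*}=U^{*}g(|A^{*}|)$, which holds for \emph{every} continuous $g$: for $g-g(0)$ it follows from $|A|U^{*}=U^{*}|A^{*}|$ (the adjoint of $U|A|=|A^{*}|U$) by polynomial approximation, and the constant part $g(0)I$ commutes with $U^{*}$ trivially. Then $\|g(|A|)U^{*}y\|=\|U^{*}g(|A^{*}|)y\|\leq\|g(|A^{*}|)y\|$ because $U^{*}$ is a contraction, and the case analysis on $g(0)$ disappears entirely.
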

\begin{lemma}\cite{Jafarmanesh} \label{eq j}
Let $0<mI \leq A, B \leq MI$, $0\leq \nu\leq1$, $!_\nu \leq \tau_\nu, \sigma_\nu \leq \bigtriangledown_\nu$ and $\Phi$ be a positive unital linear map. If $h$ is an operator monotone decreasing function on $(0, \infty)$, then
\begin{align*}
h\big(\Phi(A)\big) \sigma_\nu h\big(\Phi(B)\big) \leq k h\big(\Phi(A \tau_\nu B)\big) 
\end{align*}
where, $k=\dfrac{(M+m)^{2}}{4mM}$ stands for the known Kantorovich constant.
\end{lemma}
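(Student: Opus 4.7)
My plan is to reduce the desired inequality to a Kantorovich-type reverse Jensen estimate by collapsing both means to the weighted arithmetic mean via the sandwich $!_\nu \leq \tau_\nu, \sigma_\nu \leq \nabla_\nu$. On the left, monotonicity of operator means in the parameter gives
\[
h(\Phi(A))\,\sigma_\nu\,h(\Phi(B)) \;\leq\; (1-\nu) h(\Phi(A)) + \nu h(\Phi(B)).
\]
On the right, $A \tau_\nu B \leq A \nabla_\nu B$ together with positivity of $\Phi$ yields $\Phi(A\tau_\nu B)\leq (1-\nu)\Phi(A)+\nu\Phi(B)$, and operator monotone decreasingness of $h$ then inverts this to $h\bigl((1-\nu)\Phi(A)+\nu\Phi(B)\bigr)\leq h(\Phi(A\tau_\nu B))$. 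Chaining the two, it suffices to prove the specialized inequality
\[
(1-\nu)h(\Phi(A)) + \nu h(\Phi(B)) \;\leq\; k\,h\bigl((1-\nu)\Phi(A) + \nu\Phi(B)\bigr).
\]

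Next I would combine Choi--Davis with a Kantorovich bound. A nonnegative operator monotone decreasing function on $(0,\infty)$ is automatically operator convex (a consequence of the L\"owner / Nevanlinna--Pick integral representation, which writes $h$ as an integral of the operator convex functions $(t+s)^{-1}$). Hence Choi--Davis gives $h(\Phi(A))\leq\Phi(h(A))$ and $h(\Phi(B))\leq\Phi(h(B))$. Setting $C = A\oplus B$ on $\mathcal{H}\oplus\mathcal{H}$ and $\Psi(X\oplus Y) = (1-\nu)\Phi(X)+\nu\Phi(Y)$, one checks that $\Psi$ is positive and unital and $mI\leq C\leq MI$, so that the remaining target becomes the Kantorovich--Mond--Pe\v{c}ari\'{c} reverse Jensen inequality
\[
\Psi(h(C)) \;\leq\; k\,h(\Psi(C)).
\]

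The main obstacle is this last estimate for a \emph{general} operator monotone decreasing $h$ rather than the prototypical $h(t)=1/t$. My plan is to insert the integral representation $h(t)=\alpha+\int_0^\infty (t+s)^{-1}\,d\mu(s)$ and apply the ordinary Kantorovich inequality to the shifted operator $C+sI$:
\[
\Psi\bigl((C+sI)^{-1}\bigr) \;\leq\; \kappa(s)\,\Psi(C+sI)^{-1},\qquad \kappa(s)=\frac{(M+m+2s)^2}{4(m+s)(M+s)}.
\]
A direct calculation shows that $\kappa(s)$ is maximized at $s=0$ with value $k$, so integrating against $d\mu(s)$ and adding the constant term yields the desired uniform bound with constant $k$. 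The mildly delicate ingredients are verifying the integral representation in the precise form needed and interchanging $\Psi$ with the integral, but both are standard in the L\"owner framework.
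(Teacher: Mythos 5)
Your proof is correct, but note that the paper itself offers no proof of this lemma to compare against: it is imported verbatim by citation from the submitted reference \cite{Jafarmanesh}, so your argument stands as an independent reconstruction rather than a match or mismatch with an in-paper argument. As a reconstruction it is sound. The sandwich reduction is valid in both directions: $\sigma_\nu \leq \bigtriangledown_\nu$ collapses the left side to the weighted arithmetic mean, while $\tau_\nu \leq \bigtriangledown_\nu$, positivity of $\Phi$, and operator monotone decreasingness of $h$ correctly reverse the order on the right (this is where the hypothesis $\tau_\nu \leq \bigtriangledown_\nu$ is genuinely used, and $k\geq 1$ lets you multiply through at the end). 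The Choi--Davis step is legitimate since a nonnegative operator monotone decreasing function on $(0,\infty)$ is operator convex --- a fact the paper itself invokes from \cite{Ando} in the proof of Theorem \ref{th 3dec} --- and your direct-sum map $\Psi(X\oplus Y)=(1-\nu)\Phi(X)+\nu\Phi(Y)$ is indeed positive and unital with $mI\leq A\oplus B\leq MI$. The remaining Mond--Pe\v{c}ari\'c-type estimate $\Psi(h(C))\leq k\,h(\Psi(C))$ goes through exactly as you sketch: the representation $h(t)=\alpha+\int_{[0,\infty)}(t+s)^{-1}\,d\mu(s)$ with $\alpha\geq 0$ and $\mu$ a positive measure is the standard L\"owner-type form for nonnegative operator monotone decreasing functions (the absence of a linear term follows from boundedness below), the operator Kantorovich inequality applies to each shift $C+sI$, and writing $\kappa(s)=\frac{1}{4}\bigl(\frac{M+s}{m+s}+\frac{m+s}{M+s}+2\bigr)$ makes it transparent that $\kappa(s)\leq \kappa(0)=k$, since $\frac{M+s}{m+s}$ decreases to $1$ and $x+x^{-1}$ increases for $x\geq 1$; absorbing $\alpha I\leq k\alpha I$ uses $k\geq 1$ again. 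One small point of hygiene: the lemma as stated only says ``operator monotone decreasing,'' while your integral representation and the very definition of the means on the left require $h\geq 0$; this matches the paper's actual usage ($h:[0,\infty)\to[0,\infty)$ in Theorem \ref{th 3dec}), but you should state the nonnegativity hypothesis explicitly.
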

\begin{lemma} \label{le h2}
Let $A\in\mathcal{B}(\mathcal{H})$ be a strictly positive operator. Then for all non-negative decreasing continuous function $h$ on $[0, \infty)$, we have
\begin{align*}
\|h(A^{-1})\| \leq h(\|A\|^{-1}).
\end{align*}
\end{lemma}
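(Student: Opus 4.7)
The plan is to exploit the spectral mapping theorem together with the fact that the operator norm of a continuous nonnegative function of a positive operator equals the supremum of that function over the spectrum.

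First, I would observe that since $A$ is strictly positive, $A^{-1}$ exists, is positive, and is bounded. Consequently, $\sigma(A)\subseteq[\gamma,\|A\|]$ for some $\gamma>0$, and by the spectral mapping theorem
\begin{align*}
\sigma(A^{-1})=\{\lambda^{-1}:\lambda\in\sigma(A)\}\subseteq\bigl[\|A\|^{-1},\gamma^{-1}\bigr].
\end{align*}
In particular, $\min\sigma(A^{-1})=\|A\|^{-1}$.

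Next, since $h$ is continuous and nonnegative on $[0,\infty)$, the operator $h(A^{-1})$ is defined by the continuous functional calculus and is positive. For any positive operator $T$ one has $\|T\|=\sup\sigma(T)$, and the spectral mapping theorem gives $\sigma(h(A^{-1}))=h(\sigma(A^{-1}))$. Hence
\begin{align*}
\|h(A^{-1})\|=\sup\{h(\mu):\mu\in\sigma(A^{-1})\}.
\end{align*}

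Because $h$ is decreasing on $[0,\infty)$, the supremum of $h$ over $\sigma(A^{-1})$ is attained at the smallest point of that set, which is $\|A\|^{-1}$. Thus
\begin{align*}
\|h(A^{-1})\|=h\bigl(\min\sigma(A^{-1})\bigr)=h\bigl(\|A\|^{-1}\bigr),
\end{align*}
which yields the claimed inequality (in fact, equality). There is no serious obstacle here; the only mild subtlety is to make sure that the monotonicity of $h$ is applied on the correct subinterval of $[0,\infty)$ and that the continuous functional calculus can be invoked, both of which are guaranteed by the strict positivity of $A$ and the continuity of $h$.
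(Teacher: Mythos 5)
Your proof is correct and takes essentially the same approach as the paper's: both arguments place $\sigma(A^{-1})$ inside $[\|A\|^{-1},\infty)$ and then combine the spectral mapping theorem for the continuous functional calculus with the monotonicity of $h$ to compare $\|h(A^{-1})\|$ with $h(\|A\|^{-1})$. The only (harmless) difference is that you additionally note $\|A\|\in\sigma(A)$ for positive $A$, so $\min\sigma(A^{-1})=\|A\|^{-1}$ is attained and your bound is in fact an equality --- a sharpening the paper itself uses later, in the proof of Corollary \ref{eq chand}, where it remarks that a similar argument ``leads to equality.''
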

\begin{proof}
From $A\leq \|A\|I$, it follows that $\|A\|^{-1} I \leq A^{-1}$. That is $sp(A^{-1}) \subseteq (\|A\|^{-1}, \infty)$. So  $sp(h(A^{-1})) = h(sp(A^{-1}))\subseteq h(\|A\|^{-1}
, \infty)$. Since $h$ is decreasing, we have $h(A^{-1}) \leq h(\|A\|^{-1})I$ and therefore $\|h(A^{-1})\| \leq h(\|A\|^{-1})$.
\end{proof}
\begin{theorem} \label{th 3dec}
Let $A, B, X \in  \mathcal{B}(\mathcal{H})$ and $f, g$ are non-negative continuous functions on $[0, \infty)$ in which, $f(t)g(t)=t$, $(t\geq 0)$.\\
If $ 0<mI \leq B^{*}f^{2}(|X|)B$, $A^{*}g^{2}(|X^{*}|)A \leq MI$, $h:[0,\infty) \to[0, \infty)$ is an operator monotone decreasing function and $\sigma$ is an arbitrary mean between $\bigtriangledown$ and $!$, then for any unit vextor $x\in\mathcal{H}$,
\begin{align} \label{eq 3dec}
\left\|h\big(B^{*}f^{2}(|X|)B\big) \sigma h\big(A^{*}g^{2}(|X^{*}|)A\big)\right\| \leq \frac{m k}{M}  h\left(\left| \left\langle (A^{*}XB)x,x\right\rangle \right|\right),
\end{align}
where, $k=\dfrac{(M+m)^{2}}{4mM}$.\\
In particular,
\begin{align} \label{eq parti}
\left\|h\big(B^{*}f^{2}(|X|)B\big) \sigma h\big(A^{*}g^{2}(|X^{*}|)A\big)\right\| \leq  h\left(\left| \left\langle (A^{*}XB)x,x\right\rangle \right|\right).
\end{align}
\end{theorem}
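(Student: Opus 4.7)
\smallskip
\noindent\emph{Proof plan.} Write $P = B^{*}f^{2}(|X|)B$ and $Q = A^{*}g^{2}(|X^{*}|)A$, so that by hypothesis $0 < mI \leq P, Q \leq MI$. The strategy is to combine Lemma \ref{le k2} (a Cauchy--Schwarz-type bound for $|\langle Xu,v\rangle|$) with the Kantorovich-type refinement in Lemma \ref{eq j} and the spectral estimate in Lemma \ref{le h2}.

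The first ingredient is a pointwise scalar bound on $\langle A^{*}XBx,x\rangle$. Applying Lemma \ref{le k2} with $Bx$ in place of $x$ and $Ax$ in place of $y$ yields
\[
|\langle A^{*}XBx, x\rangle| = |\langle XBx, Ax\rangle| \leq \|f(|X|)Bx\|\,\|g(|X^{*}|)Ax\| = \langle Px, x\rangle^{1/2}\,\langle Qx, x\rangle^{1/2},
\]
and since $h$ is monotone decreasing on $[0,\infty)$, this gives the preliminary estimate $h(|\langle A^{*}XBx,x\rangle|) \geq h\bigl(\sqrt{\langle Px,x\rangle \langle Qx,x\rangle}\bigr)$. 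Observe also that $|\langle A^{*}XBx,x\rangle| \leq M$, since each factor on the right is at most $\sqrt{M}$.

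Next I would deploy Lemma \ref{eq j} in two ways. With $\Phi = \mathrm{id}$ and a fixed $\tau$ satisfying $!\leq \tau\leq \bigtriangledown$, it produces the operator inequality $h(P)\,\sigma\,h(Q) \leq k\,h(P\tau Q)$; passing to operator norms and invoking Lemma \ref{le h2} with $A=(P\tau Q)^{-1}$ yields $\|h(P\tau Q)\| \leq h(\lambda_{\min}(P\tau Q))$. With the vector state $\Phi_x(T)=\langle Tx,x\rangle$ (a positive unital linear map $\mathcal{B}(\mathcal H)\to\mathbb C$), Lemma \ref{eq j} gives the scalar Kantorovich inequality
\[
h(\langle Px,x\rangle)\,\sigma\,h(\langle Qx,x\rangle) \leq k\,h\bigl(\langle Px,x\rangle\,\tau\,\langle Qx,x\rangle\bigr).
\]
Choosing $\tau$ so that $\langle Px,x\rangle\,\tau\,\langle Qx,x\rangle \leq \sqrt{\langle Px,x\rangle\langle Qx,x\rangle}$ (e.g.\ the scalar geometric mean) combines with the first step to bound the scalar side by $k\,h(|\langle A^{*}XBx,x\rangle|)$.

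The main obstacle is to reconcile the operator norm $\|h(P)\sigma h(Q)\|$ with the scalar estimate coming from the vector state, and to squeeze the additional factor $m/M$ out of the raw Kantorovich constant $k$. The expected mechanism is the spectral sandwich $\lambda_{\min}(P\tau Q) \geq m$ together with the a priori bound $|\langle A^{*}XBx,x\rangle| \leq M$ from the first step; inserting these into the quotient that appears when Lemma \ref{le h2} converts an operator norm into a scalar value of $h$ should account for the extra $m/M$, reducing the loose constant $k$ to the sharp $mk/M$. Once the general inequality is established, the particular case \eqref{eq parti} follows by specializing to $m=M$, where $k=1$ and the Kantorovich factor collapses.
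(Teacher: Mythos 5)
With your notation $P=B^{*}f^{2}(|X|)B$, $Q=A^{*}g^{2}(|X^{*}|)A$: your plan assembles exactly the paper's toolkit --- Lemma \ref{le k2} for the opening Cauchy--Schwarz estimate $|\langle A^{*}XBx,x\rangle|\le\langle Px,x\rangle^{1/2}\langle Qx,x\rangle^{1/2}$, Lemma \ref{eq j} with $\Phi=\mathrm{id}$ and $\nu=\tfrac{1}{2}$ to get $\|h(P)\,\sigma\,h(Q)\|\le k\,\|h(P\,\nabla\,Q)\|$, and Lemma \ref{le h2} to convert that norm into a scalar value of $h$ --- but the proof is not closed, and the step you defer as ``the expected mechanism'' is precisely the heart of the theorem. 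Moreover, the mechanism you sketch cannot work: using the decoupled bounds $\lambda_{\min}(P\,\tau\,Q)\ge m$ and $|\langle A^{*}XBx,x\rangle|\le M$ separately, the strongest conclusions available are $h(\lambda_{\min}(P\,\tau\,Q))\le h(m)$ and $h(|\langle A^{*}XBx,x\rangle|)\ge h(M)$, so your route would need $h(m)\le\frac{m}{M}\,h(M)$, which fails for essentially every non-negative decreasing $h$ (for constant $h=c>0$ it reads $c\le\frac{m}{M}c$; for $h(t)=t^{-1}$ it reads $M^{2}\le m^{2}$). The bounds $m$ and $M$ sit at opposite ends of the two quantities, and once the vector state is decoupled from the operator norm all usable information is lost. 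The paper keeps them coupled: for the fixed unit vector $x$ it runs $h(|\langle A^{*}XBx,x\rangle|)\ge h\left(\left\langle \frac{P+Q}{2}x,x\right\rangle\right)$ (Cauchy--Schwarz, AM--GM, $h$ decreasing), extracts the factor $m/M$ from the convexity estimate $\frac{m}{M}h(s)\ge h\left(\frac{m}{M}s\right)$ (which the paper justifies by the operator convexity of operator monotone decreasing functions) combined with the Kantorovich-type norm inequality \eqref{eq sat2}, namely $\frac{m}{M}\left\|\frac{P+Q}{2}\right\|\le\left\|\left(\frac{P+Q}{2}\right)^{-1}\right\|^{-1}$, and only then applies Lemmas \ref{le h2} and \ref{eq j}. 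Your second, vector-state application of Lemma \ref{eq j} also points the wrong way: by operator convexity $h(\langle Px,x\rangle)\le\langle h(P)x,x\rangle$, so the scalar quantity $h(\langle Px,x\rangle)\,\sigma\,h(\langle Qx,x\rangle)$ \emph{underestimates} the operator expression, and no supremum over $x$ of it can dominate $\|h(P)\,\sigma\,h(Q)\|$.

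Your derivation of \eqref{eq parti} by ``specializing to $m=M$'' is also not legitimate: $m$ and $M$ are whatever bounds the hypothesis supplies for the given operators, and imposing $m=M$ forces the degenerate situation $P=Q=MI$ rather than recovering the general statement. The particular case follows instead directly from \eqref{eq 3dec} via the elementary observation $\frac{mk}{M}=\frac{(M+m)^{2}}{4M^{2}}=\left(\frac{M+m}{2M}\right)^{2}\le 1$ whenever $m\le M$.
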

\begin{proof}
Let $x\in\mathcal{H}$ be a unit vector. Now applying Lemma \ref{le k2}, AM-GM inequality and since every operator monotone decreasing function is operator convex \cite{Ando}, we have 
\begin{align*}
\frac{m}{M}h\left(\left| \left\langle A^{*}XBx,x\right\rangle \right|\right) \nonumber &=  \frac{m}{M} h\left(\left| \left\langle XBx,Ax\right\rangle \right|\right) \\ \nonumber & \geq  \frac{m}{M} h\left(\sqrt{\left\langle B^{*}f^{2}(|X|)Bx, x\right\rangle \left\langle A^{*}g^{2}(|X^{*}|)Ax, x\right\rangle}\right) \\ \nonumber & \geq  \frac{m}{M} h\left(\left\langle \left(\frac{B^{*}f^{2}(|X|)B + A^{*}g^{2}(|X^{*}|)A}{2}\right)x, x\right\rangle\right) \\ \nonumber &  \geq h\left( \frac{m}{M} \left\langle \left(\frac{B^{*}f^{2}(|X|)B + A^{*}g^{2}(|X^{*}|)A}{2}\right)x, x\right\rangle\right) \\  & \geq h\left( \frac{m}{M}\left\|\left(\frac{B^{*}f^{2}(|X|)B + A^{*}g^{2}(|X^{*}|)A}{2}\right)\right\|\right)    
\end{align*}
By hypothesis and operator convexity of $t \mapsto t^{-1}$, we obtain,
\begin{align*} 
\left\|\left(\frac{B^{*}f^{2}(|X|)B + A^{*}g^{2}(|X^{*}|)A}{2}\right)\right\| \leq M
\end{align*}
 and
\begin{align*}
  \left\|\left(\frac{B^{*}f^{2}(|X|)B + A^{*}g^{2}(|X^{*}|)A}{2}\right)^{-1}\right\| \leq \frac{1}{m}
\end{align*}
Therefore
\begin{align} \label{eq sat2}
\left\|\left(\frac{B^{*}f^{2}(|X|)B + A^{*}g^{2}(|X^{*}|)A}{2}\right)\right\| \leq \frac{M}{m} \left\|\left(\frac{B^{*}f^{2}(|X|)B + A^{*}g^{2}(|X^{*}|)A}{2}\right)^{-1}\right\|^{-1}
\end{align}
By using inequality \eqref{eq sat2} and Lemma \ref{le h2} , we have
\begin{align*}
& h\left(\frac{m}{M}\left\|\left(\frac{B^{*}f^{2}(|X|)B + A^{*}g^{2}(|X^{*}|)A}{2}\right)\right\|\right) \\  & \geq h\left(\left\|\left(\frac{B^{*}f^{2}(|X|)B + A^{*}g^{2}(|X^{*}|)A}{2}\right)^{-1}\right\|^{-1}\right) \\  & \geq \left\|h\left(\frac{B^{*}f^{2}(|X|)B + A^{*}g^{2}(|X^{*}|)A}{2}\right)\right\| \\ & \geq \frac{1}{k}  \left\|h\big(B^{*}f^{2}(|X|)B\big) \sigma h\big(A^{*}g^{2}(|X^{*}|)A\big)\right\|  
\end{align*}
where, in the last inequality, we used Lemma \ref{eq j} for $\nu = \frac{1}{2}$.
\end{proof}
\begin{remark}
In the assumptions of Theorem \ref{th 3dec}, we can replace $ 0<mI \leq B^{*}f^{2}(|X|)B$, $A^{*}g^{2}(|X^{*}|)A\leq MI$ with $0<mI \leq \frac{B^{*}f^{2}(|X|)B + A^{*}g^{2}(|X^{*}|)A}{2}\leq MI$.\\
 So, if we assume that $\frac{B^{*}f^{2}(|X|)B + A^{*}g^{2}(|X^{*}|)A}{2}$ is invertible, we can conclude \eqref{eq parti}.\\
  Similarly, if $\frac{B^{*}f^{2}(|X|)B + A^{*}g^{2}(|X^{*}|)A}{2}$ is not invertible, we can prove that 
 \[\left\|h\big(B^{*}f^{2}(|X|)B + \epsilon I\big) \sigma h\big(A^{*}g^{2}(|X^{*}|)A + \epsilon I\big)\right\| \leq  h\left(\left| \left\langle (A^{*}XB)x,x\right\rangle \right|\right)\]
 and taking limit of $\epsilon\to 0$, we can conclude \eqref{eq parti} without the assumption $ 0<mI \leq B^{*}f^{2}(|X|)B$, $A^{*}g^{2}(|X^{*}|)A \leq MI$.  
\end{remark}

\begin{remark}
Under the assumptions of Theorem \ref{th 3dec}, if $!_{\nu} \leq \sigma_{\nu} \leq \bigtriangledown_{\nu}$ and 
\begin{align*}
  0<mI \leq (B^{*}f^{2}(|X|)B)^{\frac{1}{1-\nu}}, (A^{*}g^{2}(|X^{*}|)A)^{\frac{1}{\nu}} \leq MI,
 \end{align*}
 then by applying \eqref{eq 002} for the concave function $t^{\nu}$ ($0<\nu<1$) and AM-GM inequality, respectively, we can write
\begin{align*}
\frac{m}{M}h\left(\left| \left\langle A^{*}XBx,x\right\rangle \right|^{2}\right)  & \geq  \frac{m}{M} h\left(\left\langle B^{*}f^{2}(|X|)Bx, x\right\rangle \left\langle A^{*}g^{2}(|X^{*}|)Ax, x\right\rangle\right) \\  & \geq  \frac{m}{M} h\left(\left\langle (B^{*}f^{2}(|X|)B)^{\frac{1}{1-\nu}}x, x\right\rangle^{1-\nu} \left\langle (A^{*}g^{2}(|X^{*}|)A)^{\frac{1}{\nu}} x, x\right\rangle^{\nu}\right) \\  & \geq  \frac{m}{M} h\left(\left\langle (1-\nu)(B^{*}f^{2}(|X|)B)^{\frac{1}{1-\nu}}
 + \nu (A^{*}g^{2}(|X^{*}|)A)^{\frac{1}{\nu}} x, x\right\rangle\right)
\end{align*}
Therefore, by similar argument to the proof of Theorem \ref{th 3dec}, we obtain
\begin{align} \label{eq 303dec}
\left\|h\big((B^{*}f^{2}(|X|)B)^{\frac{1}{1-\nu}}\big) \sigma_\nu h\big((A^{*}g^{2}(|X^{*}|)A)^{\frac{1}{\nu}}\big)\right\| \leq \frac{m k}{M}  h\left(\left| \left\langle (A^{*}XB)x,x\right\rangle \right|^{2}\right)
\end{align}
\end{remark}

\begin{lemma}\cite{Aujla} \label{eq auj}
If $A, B$ are positive operators and $f$ is a non-negative non-decreasing convex function on $[0,\infty)$, then 
\begin{align*}
\|f\big((1-\nu)A + \nu B\big)\| \leq \|(1-\nu)f(A) + \nu f(B) \|,
\end{align*}
for all $0<\nu<1$.
\end{lemma}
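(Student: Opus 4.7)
The plan is to reduce the operator norm inequality to the scalar Jensen inequality for the convex function $f$, using two ingredients: the elementary identity $\|f(X)\| = f(\|X\|)$ valid for positive $X$ and non-negative non-decreasing continuous $f$, together with the operator Jensen inequality \eqref{eq 002} already recorded in the introduction.

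First I would establish the identity $\|f(X)\| = f(\|X\|)$ for any positive $X \in \mathcal{B}(\mathcal{H})$. Since $X \geq 0$, its norm equals the maximum of its spectrum; by the spectral mapping theorem $\operatorname{spec}(f(X)) = f(\operatorname{spec}(X))$, and because $f$ is non-negative and non-decreasing the maximum of this set is $f(\|X\|)$, which coincides with $\|f(X)\|$ as $f(X) \geq 0$. Applied to $C := (1-\nu)A + \nu B$, which is positive, this converts the left-hand side of the target inequality into $f(\|C\|)$.

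Next I would pick, for each $\epsilon > 0$, a unit vector $u_\epsilon \in \mathcal{H}$ with $\langle C u_\epsilon, u_\epsilon\rangle \geq \|C\| - \epsilon$, which is possible because $C$ is self-adjoint. Setting $a_\epsilon = \langle A u_\epsilon, u_\epsilon\rangle$ and $b_\epsilon = \langle B u_\epsilon, u_\epsilon\rangle$, we have $(1-\nu)a_\epsilon + \nu b_\epsilon \geq \|C\| - \epsilon$. Then I would chain the inequalities
\begin{align*}
f(\|C\| - \epsilon) &\leq f\bigl((1-\nu)a_\epsilon + \nu b_\epsilon\bigr) \\
&\leq (1-\nu) f(a_\epsilon) + \nu f(b_\epsilon) \\
&\leq (1-\nu)\langle f(A)u_\epsilon, u_\epsilon\rangle + \nu \langle f(B)u_\epsilon, u_\epsilon\rangle \\
&\leq \bigl\|(1-\nu) f(A) + \nu f(B)\bigr\|,
\end{align*}
using, respectively, the monotonicity of $f$, the scalar Jensen inequality, the operator Jensen inequality \eqref{eq 002} applied separately to $A$ and $B$ (with the same unit vector $u_\epsilon$), and finally $\langle T u,u\rangle \leq \|T\|$ for positive $T$. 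Letting $\epsilon \to 0$ and invoking continuity of $f$ yields $f(\|C\|) \leq \|(1-\nu)f(A) + \nu f(B)\|$, which is the claim after rewriting the left-hand side as $\|f(C)\|$.

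I do not foresee any serious obstacle: the argument is a careful assembly of standard facts. The only step that needs mild care is the spectral identity in the first paragraph, which is precisely where the hypotheses of continuity, non-negativity, and monotonicity of $f$ are all used simultaneously, and the approximation argument by unit vectors, since $\|C\|$ need not be attained by an eigenvector in infinite dimensions.
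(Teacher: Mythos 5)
Your proof is correct, but it cannot be ``the same as the paper's'': the paper offers no proof of this lemma at all, quoting it from Aujla and Silva \cite{Aujla}, where it is derived (in the matrix setting) from the weak majorization $f\big((1-\nu)A+\nu B\big)\prec_{w}(1-\nu)f(A)+\nu f(B)$ and therefore holds simultaneously for every unitarily invariant norm. Your route is genuinely different and more elementary: reducing the operator norm of a positive operator to the top of its spectrum via $\|f(X)\|=f(\|X\|)$, then running scalar Jensen together with the Mond--Pe\v{c}ari\'{c} inequality \eqref{eq 002} along approximating unit vectors, you obtain only the operator-norm case, but you get it directly on an arbitrary (possibly infinite-dimensional) Hilbert space, which is precisely the setting in which this paper applies the lemma; the majorization approach buys all unitarily invariant norms but is native to matrices and requires more machinery. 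Two minor points, neither affecting correctness: the continuity of $f$ that you use both for the spectral identity and for the limit $\epsilon\to 0$ is not listed among the hypotheses, but it is automatic, since a convex function is continuous on $(0,\infty)$ and monotonicity combined with the convexity bound $f(t)\leq t f(1)+(1-t)f(0)$ forces continuity at $0$; and in the approximation step you should restrict to $0<\epsilon<\|C\|$ so that $\|C\|-\epsilon$ stays in the domain of $f$, disposing of the degenerate case $C=0$ by hand.
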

Applying Theorem \ref{th 3dec} to the decreasing convex function $h(t)=t^{-1}$ and $\sigma=\bigtriangledown$, we reach the following corollary:

\begin{corollary} \label{eq chand}
Let $A, B, X \in  \mathcal{B}(\mathcal{H})$ and $f, g$ are non-negative continuous functions on $[0, \infty)$ satisfying $f(t)g(t)=t$, $(t\geq 0)$. If\\
$0<mI\leq B^{*}f^{2}(|X|)B$, $A^{*}g^{2}(|X^{*}|)A\leq MI$, then
\begin{align} \label{eq omega}
\omega\big(A^{*}XB\big) \leq  \frac{m k}{M} \left\|B^{*}f^{2}(|X|)B  !  A^{*}g^{2}(|X^{*}|)A\right\|.
\end{align}
Furthermore, for increasing convex function $h^{'}:[0, \infty)\to[0, \infty)$, we have
\begin{align} \label{eq h^{'}}
h^{'}\left(\omega\big(A^{*}XB\big)\right) \leq \frac{m k}{2M}  \left\|h^{'}\left(B^{*}f^{2}(|X|)B\right) +  h^{'}\left(A^{*}g^{2}(|X^{*}|)A\right)\right\|.
\end{align}
In particular, for all $p \geq 1$
\begin{align} \label{eq p}
\omega^{p}\big(A^{*}XB\big) \leq \frac{m k}{2M}  \left\|\left(B^{*}f^{2}(|X|)B\right)^{p} + \left(A^{*}g^{2}(|X^{*}|)A\right)^{p}\right\|. 
\end{align}
\end{corollary}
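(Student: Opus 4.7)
I would prove the three inequalities in sequence, with the first serving as a stepping stone. The plan for \eqref{eq omega} is to apply Theorem \ref{th 3dec} with the operator monotone decreasing function $h(t)=t^{-1}$ and $\sigma=\bigtriangledown$. Writing $P_{1}=B^{*}f^{2}(|X|)B$ and $P_{2}=A^{*}g^{2}(|X^{*}|)A$, the identity $h(P_{1})\bigtriangledown h(P_{2})=\tfrac{1}{2}(P_{1}^{-1}+P_{2}^{-1})=(P_{1}\,!\,P_{2})^{-1}$ converts the conclusion of Theorem \ref{th 3dec} into
\begin{align*}
\|(P_{1}\,!\,P_{2})^{-1}\|\leq \tfrac{mk}{M}\,|\langle A^{*}XBx,x\rangle|^{-1}.
\end{align*}
Inverting both sides and using $\|T^{-1}\|^{-1}=\min\sigma(T)\leq \|T\|$ for positive invertible $T$ gives $|\langle A^{*}XBx,x\rangle|\leq \tfrac{mk}{M}\|P_{1}\,!\,P_{2}\|$, and a supremum over unit vectors yields \eqref{eq omega}.

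For \eqref{eq h^{'}}, I would feed \eqref{eq omega} into $h'$. Monotonicity of $h'$ gives $h'(\omega(A^{*}XB))\leq h'(\tfrac{mk}{M}\|P_{1}\,!\,P_{2}\|)$. Since $\tfrac{mk}{M}=\tfrac{(M+m)^{2}}{4M^{2}}\leq 1$, convexity of $h'$ together with $h'(0)=0$ yields $h'(\lambda s)\leq \lambda h'(s)$ on $[0,1]$, which pulls the Kantorovich factor outside. The identity $h'(\|T\|)=\|h'(T)\|$, valid for any non-negative increasing $h'$ and any positive $T$, then converts the scalar back into an operator norm. From the operator arithmetic--harmonic inequality $P_{1}\,!\,P_{2}\leq \tfrac{P_{1}+P_{2}}{2}$, monotonicity of $h'$ on scalars, and Aujla's Lemma \ref{eq auj} at $\nu=\tfrac{1}{2}$, one obtains
\begin{align*}
\|h'(P_{1}\,!\,P_{2})\|\leq \|h'(\tfrac{P_{1}+P_{2}}{2})\|\leq \tfrac{1}{2}\|h'(P_{1})+h'(P_{2})\|,
\end{align*}
and stringing the chain together produces \eqref{eq h^{'}}. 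The inequality \eqref{eq p} is then the immediate specialization $h'(t)=t^{p}$, $p\geq 1$, since this function is non-negative, increasing, convex on $[0,\infty)$, and vanishes at $0$.

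The delicate step is the scaling $h'(\tfrac{mk}{M}s)\leq \tfrac{mk}{M}h'(s)$, which needs $h'(0)=0$; this is automatic for $h'(t)=t^{p}$ but is not explicitly hypothesized for general $h'$. If one wishes to avoid that assumption, I would instead re-run the proof of Theorem \ref{th 3dec} with $h'$ in place of $h$, substituting Aujla's lemma for operator convexity of $h$ while retaining the Kantorovich factor provided by \eqref{eq sat2}; this sidesteps the scaling step at the cost of more careful bookkeeping in the chain.
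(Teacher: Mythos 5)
Your proposal is correct and matches the paper's argument: \eqref{eq omega} is exactly the paper's application of Theorem \ref{th 3dec} with $h(t)=t^{-1}$ and $\sigma=\bigtriangledown$, and your chain for \eqref{eq h^{'}} (monotonicity of $h'$, extraction of the factor $\tfrac{mk}{M}\leq 1$ by convexity, the identity $h'(\|T\|)=\|h'(T)\|$, and Lemma \ref{eq auj} at $\nu=\tfrac{1}{2}$) is the paper's proof with the harmonic--arithmetic step merely inserted at a slightly different point in the chain. The scaling step $h'\bigl(\tfrac{mk}{M}s\bigr)\leq \tfrac{mk}{M}h'(s)$ that you rightly flag as requiring $h'(0)=0$ is used implicitly in the paper's own proof as well, so your caveat and suggested workaround apply equally to the published argument.
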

\begin{proof}
Let us prove \eqref{eq h^{'}}. By inequalities \eqref{eq omega} and Lemma \ref{eq auj}, we get
\begin{align*}
h^{'}\left(\omega\big(A^{*}XB\big)\right) &\leq h^{'}\left(\frac{m k}{2M} \left\|B^{*}f^{2}(|X|)B  + A^{*}g^{2}(|X^{*}|)A\right\|\right) \\ & \leq \frac{m k}{M} h^{'}\left(\left\| \frac{B^{*}f^{2}(|X|)B  +  A^{*}g^{2}(|X^{*}|)A}{2}\right\|\right) \\ & \leq \frac{m k}{M} \left\|h^{'}\left(\frac{B^{*}f^{2}(|X|)B  +  A^{*}g^{2}(|X^{*}|)A}{2}\right)\right\| \\ & \leq  \frac{m k}{2M} \left\|h^{'}\big(B^{*}f^{2}(|X|)B\big)  + h^{'}\big(A^{*}g^{2}(|X^{*}|)A\big)\right\|   
\end{align*}
The third inequality in the above inequalities follows from \eqref{eq 002} (in fact, a similar argument to the proof of Lemma \ref{le h2}, leads to equality). The latest inequality obtains from Lemma \ref{eq auj}.\\
By taking $h^{'}(t)=t^{p} (p>1)$, we reach inequality \eqref{eq p}.  
\end{proof}

By taking $f(t)=g(t)=t^{\frac{1}{2}}$ in inequalities \eqref{eq omega} we get a refinement of inequality \eqref{Shebrawi} for $p=1$, and if we put $f(t)=g(t)=t^{\frac{1}{2}}$ in \eqref{eq p}, we present a refinement of inequality \eqref{Shebrawi}.\\

Applying inequality  \eqref{eq 303dec} to the decreasing convex function $h(t)=t^{-1}$, one can reach the similar results as corollary \ref{eq chand} (we omit the detail).\\

Next, we need the following two lemmas. The first lemma in part (a), which contains a very useful numerical radius inequality, can be found in \cite{Yamazaki}. Part (b) is well-known (see \cite{Bhatia}) and two lemma concerning spectral radius inequalities was given in \cite{Omar}.  

\begin{lemma}\label{le2.1}
Let $A$ be an operator in $\mathcal{B}(\mathcal{H})$. Then
\begin{enumerate}
\item[(a)]
$\omega(A)=\sup_{\theta\in \mathbb{R}}\|{\rm Re}(e^{i\theta}A)\|=\frac{1}{2}\sup_{\theta\in \mathbb{R}}\|A+e^{i\theta}A^{*}\|$.
\item[(b)]
$\omega(\begin{bmatrix}
A&0\\
0&B
\end{bmatrix})=\max(\omega(A),\omega(B))$.
\end{enumerate}
\end{lemma}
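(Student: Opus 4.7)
The plan is to prove the two parts of Lemma~\ref{le2.1} independently by direct manipulation of the numerical-radius definition.

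For part (a), I would start from the elementary identity $|z|=\sup_{\theta\in\mathbb{R}}\mathrm{Re}(e^{i\theta}z)$ for $z\in\mathbb{C}$. Applying this to $z=\langle Ax,x\rangle$ for a unit vector $x$ and using the identity $\mathrm{Re}\langle e^{i\theta}Ax,x\rangle=\langle\mathrm{Re}(e^{i\theta}A)x,x\rangle$, then interchanging the suprema over unit $x$ and over $\theta$, one obtains
\[\omega(A)=\sup_{\theta}\sup_{\|x\|=1}\langle\mathrm{Re}(e^{i\theta}A)x,x\rangle=\sup_{\theta}\lambda_{\max}\bigl(\mathrm{Re}(e^{i\theta}A)\bigr).\]
The key step is to upgrade $\lambda_{\max}$ to the operator norm: since $\mathrm{Re}(e^{i(\theta+\pi)}A)=-\mathrm{Re}(e^{i\theta}A)$, the family is closed under negation, so the supremum over $\theta$ also captures $-\lambda_{\min}$, and $\max(\lambda_{\max},-\lambda_{\min})$ equals the norm for a self-adjoint operator. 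For the second equality, I would write $2\,\mathrm{Re}(e^{i\theta}A)=e^{i\theta}A+e^{-i\theta}A^{*}=e^{i\theta}(A+e^{-2i\theta}A^{*})$, discard the unit-modulus scalar (which does not affect the norm), and reparametrize $\phi=-2\theta$.

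For part (b), write $T=\begin{bmatrix}A&0\\0&B\end{bmatrix}$ acting on $\mathcal{H}\oplus\mathcal{H}$. For any unit vector $(x,y)$ (so $\|x\|^{2}+\|y\|^{2}=1$), the scaling bound $|\langle Au,u\rangle|\leq\omega(A)\|u\|^{2}$ gives
\[\bigl|\langle T(x,y),(x,y)\rangle\bigr|=\bigl|\langle Ax,x\rangle+\langle By,y\rangle\bigr|\leq\omega(A)\|x\|^{2}+\omega(B)\|y\|^{2}\leq\max(\omega(A),\omega(B)),\]
so that $\omega(T)\leq\max(\omega(A),\omega(B))$ after taking the supremum. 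The reverse inequality follows by testing $T$ against vectors of the form $(x,0)$ and $(0,y)$, which recover $\omega(A)$ and $\omega(B)$ respectively.

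The only point that requires a moment of care is the $\lambda_{\max}$-to-norm upgrade in part (a), which turns on recognizing that $\{\mathrm{Re}(e^{i\theta}A):\theta\in\mathbb{R}\}$ is closed under negation; without exploiting this symmetry one would only capture the top eigenvalue rather than the full norm. The remaining manipulations in (a), and the entirety of (b), are routine.
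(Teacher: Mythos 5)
Your proof is correct, but there is nothing in the paper to compare it against: the paper states Lemma~\ref{le2.1} without proof, attributing part (a) to Yamazaki \cite{Yamazaki} and part (b) to Bhatia \cite{Bhatia} as well-known facts. Your blind argument is essentially the standard proof from the literature, reconstructed correctly: the identity $|z|=\sup_{\theta}\mathrm{Re}(e^{i\theta}z)$, the interchange of suprema, the negation-symmetry $\mathrm{Re}(e^{i(\theta+\pi)}A)=-\mathrm{Re}(e^{i\theta}A)$ to upgrade the top of the spectrum to the full norm, and the factor-out-and-reparametrize step $2\,\mathrm{Re}(e^{i\theta}A)=e^{i\theta}\bigl(A+e^{-2i\theta}A^{*}\bigr)$ for the second equality are all exactly right, as is the direct-sum computation in (b). The one point of hygiene worth flagging is that in infinite dimensions $\lambda_{\max}$ should be read as $\sup\sigma(S)$ (equivalently $\sup_{\|x\|=1}\langle Sx,x\rangle$ for self-adjoint $S$) rather than an eigenvalue, since the supremum need not be attained; with that reading, your identity $\|S\|=\max\bigl(\sup\sigma(S),-\inf\sigma(S)\bigr)$ and the rest of the argument go through verbatim. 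So what your write-up buys, relative to the paper, is a self-contained verification of a lemma the paper imports by citation; you correctly identified the negation-symmetry upgrade as the only nontrivial moment in (a).
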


\begin{lemma} \label{th Abu} 
Let $A_{1}, A_{2}, B_{1},B_{2} \in \mathcal{B}(\mathcal{H})$. Then
\begin{align*}
r(A_{1}B_{1} + A_{2}B_{2}) & \leq \frac{1}{2} (\omega(B_{1}A_{1}) + \omega(B_{2}A_{2}))\\ & + \frac{1}{2} \sqrt{(\omega(B_{1}A_{1}) - \omega(B_{2}A_{2}))^{2} + 4\|B_{1}A_{2}\| \|B_{2}A_{1}\|}.
\end{align*}
\end{lemma}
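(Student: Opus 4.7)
The plan is to reduce $r(A_1 B_1 + A_2 B_2)$ to the spectral radius of a $2\times 2$ block operator matrix and then dominate it by a $2\times 2$ scalar matrix of numerical radii and norms, whose spectral radius can be computed explicitly.

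First, I would write the sum as a product of row and column block operators:
\begin{align*}
A_1 B_1 + A_2 B_2 = \begin{bmatrix} A_1 & A_2 \end{bmatrix} \begin{bmatrix} B_1 \\ B_2 \end{bmatrix}.
\end{align*}
Using the cyclic property of the spectral radius $r(XY)=r(YX)$, this yields
\begin{align*}
r(A_1 B_1 + A_2 B_2) = r\left(\begin{bmatrix} B_1 \\ B_2 \end{bmatrix}\begin{bmatrix} A_1 & A_2 \end{bmatrix}\right) = r\left(\begin{bmatrix} B_1 A_1 & B_1 A_2 \\ B_2 A_1 & B_2 A_2 \end{bmatrix}\right).
\end{align*}

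Second, I would invoke a standard majorization result for block operator matrices: for any $T=\bigl[T_{ij}\bigr]_{i,j=1}^{2}\in\mathcal{B}(\mathcal{H}\oplus\mathcal{H})$ one has
\begin{align*}
r(T) \leq r\left(\begin{bmatrix} \omega(T_{11}) & \|T_{12}\| \\ \|T_{21}\| & \omega(T_{22}) \end{bmatrix}\right),
\end{align*}
which follows by an iterated application of Lemma~\ref{le2.1}(a) and (b) together with the triangle inequality applied to powers of $T$ and the spectral radius formula $r(T)=\lim_n\|T^n\|^{1/n}$. Applied to the block matrix above, this gives
\begin{align*}
r(A_1B_1+A_2B_2)\leq r\left(\begin{bmatrix} \omega(B_1A_1) & \|B_1A_2\| \\ \|B_2A_1\| & \omega(B_2A_2) \end{bmatrix}\right).
\end{align*}

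Finally, the spectral radius of a $2\times 2$ scalar matrix with non-negative entries $\begin{bmatrix} a & b \\ c & d \end{bmatrix}$ is the larger root of its characteristic polynomial, namely $\tfrac{1}{2}\bigl((a+d)+\sqrt{(a-d)^2+4bc}\bigr)$. Substituting $a=\omega(B_1A_1)$, $d=\omega(B_2A_2)$, $b=\|B_1A_2\|$, $c=\|B_2A_1\|$ recovers exactly the right hand side of the claimed inequality.

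The main obstacle is justifying the second step cleanly: establishing that the spectral radius of the block operator matrix is bounded by that of the $2\times 2$ scalar matrix of numerical radii and norms. One careful way is to estimate $\|T^n x\|$ entrywise using $\|T_{ii}^k v\|\leq \|\,|T_{ii}|^k\,\|\,\|v\|$ combined with the numerical-radius power inequality $\|T_{ii}^k\|\leq 2\omega(T_{ii})^k$ (losing only a constant that washes out in $\lim_n\|T^n\|^{1/n}$), then taking $n$-th roots; alternatively one can cite the spectral radius lemma from \cite{Omar} directly, as the authors do.
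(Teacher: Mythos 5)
Your outer architecture is sound, and it is in fact the route taken in \cite{Omar}: note first that the paper itself contains no proof of this lemma at all --- it is stated with the citation to Abu-Omar and Kittaneh --- so the relevant comparison is with the cited proof, whose first and last steps you have reproduced exactly: the factorization $A_{1}B_{1}+A_{2}B_{2}=\begin{bmatrix} A_{1} & A_{2}\end{bmatrix}\begin{bmatrix} B_{1}\\ B_{2}\end{bmatrix}$, the commutation rule $r(XY)=r(YX)$ to pass to $\begin{bmatrix} B_{1}A_{1} & B_{1}A_{2}\\ B_{2}A_{1} & B_{2}A_{2}\end{bmatrix}$, and the closed formula for the spectral radius of a nonnegative $2\times2$ scalar matrix. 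The scalar majorization you interpose, $r(T)\leq r\left(\begin{bmatrix}\omega(T_{11}) & \|T_{12}\|\\ \|T_{21}\| & \omega(T_{22})\end{bmatrix}\right)$, is also a true statement --- but it is the entire nontrivial content of the lemma, and neither of your proposed justifications for it holds up.

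Concretely, your power-method fallback fails quantitatively. Expanding $(T^{n})_{ij}$ as a sum of words in the block entries, each word splits into maximal diagonal runs $T_{ii}^{k}$ separated by off-diagonal factors, and the estimate $\|T_{ii}^{k}\|\leq 2\,\omega(T_{ii})^{k}$ must be paid once per run; the number of runs is comparable to the number of off-diagonal factors, which can grow linearly in $n$, so the accumulated factor is $2^{O(n)}$ and does \emph{not} wash out in $\lim_{n}\|T^{n}\|^{1/n}$. Absorbing one factor $2$ into each off-diagonal step yields only $r(T)\leq r\left(\begin{bmatrix}\omega(T_{11}) & 2\|T_{12}\|\\ 2\|T_{21}\| & \omega(T_{22})\end{bmatrix}\right)$, i.e.\ $16\|B_{1}A_{2}\|\|B_{2}A_{1}\|$ under the square root instead of $4\|B_{1}A_{2}\|\|B_{2}A_{1}\|$. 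Likewise, any argument routed through $r(T)\leq\omega(T)$ and Lemma \ref{le2.1} alone can at best produce $(\|T_{12}\|+\|T_{21}\|)^{2}$ in place of $4\|T_{12}\|\|T_{21}\|$, since the numerical radius of a scalar matrix $\begin{bmatrix} a & b\\ c & d\end{bmatrix}$ with nonnegative entries equals $\frac{a+d}{2}+\frac{1}{2}\sqrt{(a-d)^{2}+(b+c)^{2}}$, which strictly exceeds its spectral radius whenever $b\neq c$. The missing idea is similarity invariance of the spectral radius: with $D_{t}=\begin{bmatrix} tI & 0\\ 0 & I\end{bmatrix}$, $t>0$, one has $r(T)=r(D_{t}TD_{t}^{-1})\leq\omega(D_{t}TD_{t}^{-1})$; the two-by-two numerical radius bound of \cite{Hirzallah} (or the elementary estimate $|\langle Tx,x\rangle|\leq\omega(T_{11})\|x_{1}\|^{2}+\omega(T_{22})\|x_{2}\|^{2}+(\|T_{12}\|+\|T_{21}\|)\|x_{1}\|\|x_{2}\|$) then gives
\begin{align*}
r(T)\leq \frac{1}{2}\big(\omega(T_{11})+\omega(T_{22})\big)+\frac{1}{2}\sqrt{\big(\omega(T_{11})-\omega(T_{22})\big)^{2}+\big(t\|T_{12}\|+t^{-1}\|T_{21}\|\big)^{2}},
\end{align*}
and minimizing over $t>0$ (at $t=\sqrt{\|T_{21}\|/\|T_{12}\|}$, the degenerate cases being handled by letting $t\to0^{+}$ or $t\to\infty$) converts $\big(t\|T_{12}\|+t^{-1}\|T_{21}\|\big)^{2}$ into $4\|T_{12}\|\|T_{21}\|$, completing the argument. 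If instead you simply cite \cite{Omar} for this middle step --- which, as you correctly observe, is all the present paper does --- the rest of your write-up is correct.
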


In the next theorem, we give an inequality similar to \eqref{Sattari}.

\begin{theorem}
Let $A, B \in \mathcal{B}(\mathcal{H})$. Then for all non-negative non-decreasing convex function $h$ on $[0, \infty)$, we have
\begin{align} \label{eq mox}
h\big(\omega(A^{*}B)\big) \leq \frac{1}{2} h(\|A\| \|B\|) + \frac{1}{2} h\big(\omega(BA^{*})\big).
\end{align} 
\end{theorem}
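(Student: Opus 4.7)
The plan is to first prove the linear inequality
\[
\omega(A^{*}B) \leq \tfrac{1}{2}\|A\|\|B\| + \tfrac{1}{2}\omega(BA^{*}),
\]
and then deduce the $h$-version by monotonicity and convexity. This way the hard analytic work is separated from the elementary final step.

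For the linear inequality I would start from Lemma \ref{le2.1}(a), which gives
\[
\omega(A^{*}B) = \sup_{\theta\in\mathbb{R}}\bigl\|\operatorname{Re}(e^{i\theta}A^{*}B)\bigr\|
= \sup_{\theta\in\mathbb{R}}\Bigl\|\tfrac{1}{2}\bigl(e^{i\theta}A^{*}B + e^{-i\theta}B^{*}A\bigr)\Bigr\|.
\]
The operator inside the norm is self-adjoint, so its operator norm equals its spectral radius. This is the key observation that lets me bring in Lemma \ref{th Abu}. Writing the self-adjoint operator as a sum $A_{1}B_{1}+A_{2}B_{2}$ with
\[
A_{1} = \tfrac{e^{i\theta}}{2}A^{*},\ B_{1}=B,\qquad A_{2}=\tfrac{e^{-i\theta}}{2}B^{*},\ B_{2}=A,
\]
I compute the four ingredients: $B_{1}A_{1}=\tfrac{e^{i\theta}}{2}BA^{*}$, $B_{2}A_{2}=\tfrac{e^{-i\theta}}{2}AB^{*}$, so $\omega(B_{1}A_{1})=\omega(B_{2}A_{2})=\tfrac{1}{2}\omega(BA^{*})$ (the second equality uses $\omega(X)=\omega(X^{*})$); and $B_{1}A_{2}=\tfrac{e^{-i\theta}}{2}BB^{*}$, $B_{2}A_{1}=\tfrac{e^{i\theta}}{2}AA^{*}$, so $\|B_{1}A_{2}\|\|B_{2}A_{1}\|=\tfrac{1}{4}\|A\|^{2}\|B\|^{2}$. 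Plugging into Lemma \ref{th Abu} the difference-of-$\omega$ term vanishes and the square root collapses to $\|A\|\|B\|$, giving
\[
\bigl\|\operatorname{Re}(e^{i\theta}A^{*}B)\bigr\| \leq \tfrac{1}{2}\omega(BA^{*}) + \tfrac{1}{2}\|A\|\|B\|
\]
uniformly in $\theta$. Taking the supremum yields the desired linear inequality.

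For the final step, I apply the non-negative, non-decreasing convex function $h$ to both sides. Monotonicity gives
\[
h\bigl(\omega(A^{*}B)\bigr) \leq h\!\left(\tfrac{1}{2}\|A\|\|B\| + \tfrac{1}{2}\omega(BA^{*})\right),
\]
and convexity of $h$ together with $h\geq 0$ produces
\[
h\!\left(\tfrac{1}{2}\|A\|\|B\| + \tfrac{1}{2}\omega(BA^{*})\right) \leq \tfrac{1}{2}h(\|A\|\|B\|) + \tfrac{1}{2}h(\omega(BA^{*})),
\]
which is exactly \eqref{eq mox}.

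The main obstacle is the correct choice of $A_{1},B_{1},A_{2},B_{2}$ so that applying Lemma \ref{th Abu} produces a \emph{vanishing} cross difference $\omega(B_{1}A_{1})-\omega(B_{2}A_{2})$ and a product $\|B_{1}A_{2}\|\|B_{2}A_{1}\|$ that factors cleanly as $\|A\|^{2}\|B\|^{2}$ (up to the factor $\tfrac14$). The splitting above is engineered precisely to exploit $\omega(X)=\omega(X^{*})$ on the one hand and $\|XX^{*}\|=\|X\|^{2}$ on the other; any less symmetric assignment would leave a nontrivial square root and spoil the clean bound.
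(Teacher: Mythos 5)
Your proof is correct and follows essentially the same route as the paper's: both reduce via Lemma \ref{le2.1}(a) to bounding $\|\mathrm{Re}(e^{i\theta}A^{*}B)\|$ by its spectral radius and then apply Lemma \ref{th Abu} with a phase-decorated splitting into $A^{*}$ and $B^{*}$ factors (you merely absorb the factor $\tfrac{1}{2}$ into $A_{1},A_{2}$ where the paper keeps it outside), using $\omega(AB^{*})=\omega(BA^{*})$ to kill the difference term, and finishing with monotonicity and convexity of $h$. The only cosmetic remark is that the hypothesis $h\geq 0$ is not actually needed in your final step; equal-weight convexity alone suffices.
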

\begin{proof}
Let $\theta \in \mathbb{R}$. Letting $A_{1}=e^{i\theta}A^{*}$, $B_{1}=B$, $A_{2}=B^{*}$ and $B_{2}=e^{-i\theta}A$ in Lemma \ref{th Abu} we can write
\begin{align*}
\|Re(e^{i\theta}(A^{*}B))\| &= r(Re(e^{i\theta}(A^{*}B))  \\ & \leq \frac{1}{4} (\omega(BA^{*}) + \omega(AB^{*})) \\ &+ \frac{1}{4} \sqrt {(\omega(BA^{*}) - \omega(AB^{*}))^{2} + 4\|AA^{*}\| \|BB^{*}\|} \\  & = \frac{1}{2} \omega(BA^{*}) + \frac{1}{2} \|A\| \|B\|
\end{align*}
Hence, by Lemma \ref{le2.1} (a) and convexity of $h$, we get \eqref{eq mox}.
\end{proof}
\begin{example}
Letting $A= \begin{bmatrix}
1&0\\
-1&2
\end{bmatrix}$ and $B= \begin{bmatrix}
1&5\\
-1&2
\end{bmatrix}$. Since\\ 
$ \frac{1}{4} \|AA^{*} + BB^{*}\| = 7.5432$ and $\frac{1}{2} \|A|\|B| = 6.1962$, we can say that inequality \eqref{eq mox}, in this example, is a refinement of \eqref{Sattari}. 
\end{example} 
\begin{corollary}
Let $A, B \in \mathcal{B}(\mathcal{H})$. Then for all $p\geq 1$ we have
\begin{align*}
\omega^{p}(A^{*}B) \leq \frac{1}{2} \|A\|^{p}\|B\|^{p} + \frac{1}{2} \omega^{p}(BA^{*}).
\end{align*}
\end{corollary}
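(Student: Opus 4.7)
The plan is to obtain this corollary as an immediate specialization of the preceding theorem, namely the inequality \eqref{eq mox}, applied to the one-parameter family $h(t) = t^{p}$ with $p \geq 1$. First I would verify that this $h$ satisfies all three hypotheses imposed in that theorem on $[0, \infty)$: it is non-negative since $t \geq 0$; it is non-decreasing because its derivative $p t^{p-1}$ is non-negative; and for $p \geq 1$ it is convex, as its second derivative $p(p-1) t^{p-2}$ is non-negative on $(0, \infty)$ (the edge case $p = 1$ being trivially convex since $h$ is then affine).

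Once these conditions are in place, substituting $h(t) = t^{p}$ directly into \eqref{eq mox} gives
\begin{align*}
\omega^{p}(A^{*}B) \leq \frac{1}{2} \bigl(\|A\|\,\|B\|\bigr)^{p} + \frac{1}{2} \omega^{p}(BA^{*}) = \frac{1}{2} \|A\|^{p}\|B\|^{p} + \frac{1}{2} \omega^{p}(BA^{*}),
\end{align*}
using $(\|A\|\|B\|)^{p} = \|A\|^{p}\|B\|^{p}$ since both factors are non-negative scalars. This is precisely the claimed bound, so no further work is required.

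There is no real obstacle in this argument; the entire content lies in the theorem already proved, and the corollary is just the natural $p$-th power version obtained by choosing the simplest convex test function. The only mildly nontrivial observation is that the hypotheses of the theorem hold uniformly across all $p \geq 1$, which is why the single inequality \eqref{eq mox} produces an entire family of numerical radius inequalities.
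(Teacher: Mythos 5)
Your proposal is correct and matches the paper's intent exactly: the corollary is stated there without proof precisely because it is the specialization $h(t)=t^{p}$, $p\geq 1$, of inequality \eqref{eq mox}, and your verification that this $h$ is non-negative, non-decreasing, and convex on $[0,\infty)$ is all that is needed.
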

\begin{corollary}
Let $A \in \mathcal{B}(\mathcal{H})$, $A = U|A|$ be the polar decomposition of $A$, and $f$, $g$ be two non-negative continuous functions on $[0, \infty)$ such that $f(t)g(t)=t$ $(t\geq0)$ and let $\tilde{A}_{f,g}=f(|A|)Ug(|A|)$ be generalize the Aluthge transform of $A$. Then for all $p\geq1$, 
\[\omega^{p}(A) \leq \frac{1}{2} \|f(|A|)\|^{p} \|g(|A|)\|^{p} + \frac{1}{2} \omega^{p}(\tilde{A}_{f,g}).
\] 
\end{corollary}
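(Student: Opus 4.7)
The plan is to derive this corollary directly from the preceding corollary
\[
\omega^p(A^*B) \leq \tfrac{1}{2}\|A\|^p\|B\|^p + \tfrac{1}{2}\omega^p(BA^*)
\]
by choosing a suitable factorization of $A$. First I would use the polar decomposition $A=U|A|$ together with the identity $f(t)g(t)=t$: since $f(|A|)$ and $g(|A|)$ commute as continuous functions of the same self-adjoint operator, $|A|=g(|A|)f(|A|)$, so that
\[
A = U\,g(|A|)\,f(|A|).
\]

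Next I would apply the preceding corollary with the concrete choice $A_{0}:=g(|A|)U^{*}$ and $B_{0}:=f(|A|)$. A direct computation, using that $g(|A|)$ and $f(|A|)$ are self-adjoint, gives
\[
A_{0}^{*}B_{0} = U\,g(|A|)\,f(|A|) = U|A| = A, \qquad B_{0}A_{0}^{*} = f(|A|)\,U\,g(|A|) = \tilde{A}_{f,g}.
\]
The corollary then yields
\[
\omega^{p}(A) = \omega^{p}(A_{0}^{*}B_{0}) \leq \tfrac{1}{2}\|A_{0}\|^{p}\|B_{0}\|^{p} + \tfrac{1}{2}\omega^{p}(\tilde{A}_{f,g}).
\]
Finally, since $U$ is a partial isometry with $\|U^{*}\|\leq 1$, I would bound $\|A_{0}\| = \|g(|A|)U^{*}\| \leq \|g(|A|)\|$, while $\|B_{0}\|=\|f(|A|)\|$ is immediate. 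Substituting these estimates produces exactly the claimed inequality.

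The only real obstacle is the bookkeeping in the factorization: one must put $g(|A|)$ on the left of $f(|A|)$ in the factorization of $|A|$ so that, after transposing to compute $B_{0}A_{0}^{*}$, the factor $f(|A|)$ ends up on the left of $U$ and $g(|A|)$ on the right, matching the definition $\tilde{A}_{f,g}=f(|A|)Ug(|A|)$. The alternate choice $A_{0}=f(|A|)U^{*}$, $B_{0}=g(|A|)$ would instead give $g(|A|)Uf(|A|)$, which is not the generalized Aluthge transform appearing in the statement. Beyond this, the argument is routine.
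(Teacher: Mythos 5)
Your proof is correct and is essentially the paper's own route: the corollary is stated there as an immediate consequence of the preceding bound $\omega^{p}(A^{*}B) \leq \frac{1}{2}\|A\|^{p}\|B\|^{p} + \frac{1}{2}\omega^{p}(BA^{*})$, instantiated exactly as you do via $A = \big(g(|A|)U^{*}\big)^{*}f(|A|)$, so that $B_{0}A_{0}^{*} = f(|A|)Ug(|A|) = \tilde{A}_{f,g}$ and $\|g(|A|)U^{*}\| \leq \|g(|A|)\|$ since $\|U^{*}\| \leq 1$. Your remark about the ordering of $f(|A|)$ and $g(|A|)$ in the factorization is also the right point of care, and your argument handles it correctly.
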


The following lemma will be useful in thr proof of the next results.

\begin{lemma} \cite{Kittaneh1}\label{le2.2}
If $A_{1},A_{2},B_{1},B_{2},X$ and $Y$ are operators in $\mathcal{B}(\mathcal{H})$. Then
\begin{equation}\label{eq2.1}
2\big\|A_{1}XA_{2}^{*}+B_{1}YB_{2}^{*}\big\| \leq \left\|\begin{bmatrix}
A_{1}^{*}A_{1}X+XA_{2}^{*}A_{2}&A_{1}^{*}B_{1}Y+XA_{2}^{*}B_{2}\\
B_{1}^{*}A_{1}X+YB_{2}^{*}A_{2}&B_{1}^{*}B_{1}Y+YB_{2}^{*}B_{2}
\end{bmatrix}\right\|
\end{equation}
\end{lemma}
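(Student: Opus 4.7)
The plan is to reduce the asymmetric-looking right-hand side to the familiar form of the Bhatia--Kittaneh operator arithmetic--geometric mean inequality by packaging the six operators into three block operators. First I would introduce
$$
C = \begin{bmatrix} A_1 & B_1 \end{bmatrix}, \qquad D = \begin{bmatrix} A_2 & B_2 \end{bmatrix}, \qquad Q = \begin{bmatrix} X & 0 \\ 0 & Y \end{bmatrix},
$$
viewing $C$ and $D$ as row block operators $\mathcal{H}\oplus\mathcal{H}\to\mathcal{H}$ and $Q$ as a diagonal operator on $\mathcal{H}\oplus\mathcal{H}$.

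A direct block-matrix computation then produces two identities that do all the work. On the one hand, $CQD^{*}=A_{1}XA_{2}^{*}+B_{1}YB_{2}^{*}$, so the left-hand side of \eqref{eq2.1} equals $2\|CQD^{*}\|$. On the other hand, computing $C^{*}C\cdot Q$ and $Q\cdot D^{*}D$ entry-by-entry and adding reproduces precisely the $2\times 2$ block matrix appearing on the right of \eqref{eq2.1}; that is, the right-hand side equals $\|C^{*}CQ+QD^{*}D\|$. With these identifications in hand, the whole lemma reduces to establishing the cleaner estimate
$$
2\|CQD^{*}\|\ \leq\ \|C^{*}CQ+QD^{*}D\|.
$$

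To finish, I would take polar decompositions $C=U|C|$ and $D=W|D|$ with $U,W$ partial isometries, which give $CQD^{*}=U\,|C|\,Q\,|D|\,W^{*}$, whence $\|CQD^{*}\|\leq\||C|\,Q\,|D|\|$ by contractivity of partial isometries in the operator norm. Since $|C|^{2}=C^{*}C\geq 0$ and $|D|^{2}=D^{*}D\geq 0$, applying the Bhatia--Kittaneh AM--GM inequality $2\|S^{1/2}ZT^{1/2}\|\leq\|SZ+ZT\|$ (valid for $S,T\geq 0$ and arbitrary $Z$) with $S=C^{*}C$, $T=D^{*}D$, and $Z=Q$ yields $2\||C|\,Q\,|D|\|\leq\|C^{*}CQ+QD^{*}D\|$, and chaining the two estimates completes the proof.

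The only obstacle worth flagging is the initial packaging step: the $2\times 2$ block on the right of \eqref{eq2.1} looks mildly asymmetric, and recognising that it is exactly $C^{*}CQ+QD^{*}D$ for the natural row/diagonal choices of $C,D,Q$ is where the argument lives. Once that identification is made, nothing beyond polar decomposition and the classical operator AM--GM inequality is required.
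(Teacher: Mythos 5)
Your proof is correct, and every step checks out: with $C=\begin{bmatrix} A_1 & B_1\end{bmatrix}$, $D=\begin{bmatrix} A_2 & B_2\end{bmatrix}$ and $Q=X\oplus Y$, one indeed has $CQD^{*}=A_{1}XA_{2}^{*}+B_{1}YB_{2}^{*}$, while $C^{*}CQ+QD^{*}D$ reproduces exactly the block matrix on the right of \eqref{eq2.1}, and the partial isometries from the polar decompositions contract the operator norm as you claim. Note, however, that the paper offers no proof of Lemma \ref{le2.2} at all — it is imported by citation from Kittaneh's 2002 paper — so there is no in-paper argument to compare against; your block-packaging reduction is essentially the standard derivation behind the cited result. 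One point of attribution deserves care: the inequality $2\|S^{1/2}ZT^{1/2}\|\leq\|SZ+ZT\|$ is usually stated for matrices and unitarily invariant norms (Bhatia--Davis, Bhatia--Kittaneh); for the operator norm on an infinite-dimensional Hilbert space the clean reference is McIntosh's arithmetic--geometric mean inequality $\|A^{*}XB\|\leq\frac{1}{2}\|AA^{*}X+XBB^{*}\|$, valid for all $A,B,X\in\mathcal{B}(\mathcal{H})$. In fact, applying McIntosh's inequality directly with $A=C^{*}$, $B=D^{*}$, $X=Q$ yields $2\|CQD^{*}\|\leq\|C^{*}CQ+QD^{*}D\|$ in one stroke, so your polar-decomposition step, while perfectly sound, can be absorbed entirely into the key lemma.
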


\begin{theorem}\label{th2.7}
Let $A,B,X\in \mathcal{B}(\mathcal{H})$. Then
\begin{equation}\label{eq 333}
\omega(A^{*}XB)\leq \frac
{1}{4}\|AA^{*}X+XBB^{*}\|+\frac{1}{2}\omega(\begin{bmatrix}
XBA^{*}&0\\
0& BA^{*}X
\end{bmatrix})
\end{equation}
\end{theorem}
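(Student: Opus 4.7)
The plan is to combine the supremum identity $\omega(A^{*}XB)=\frac{1}{2}\sup_{\theta\in\mathbb{R}}\|A^{*}XB+e^{i\theta}B^{*}X^{*}A\|$ from Lemma \ref{le2.1}(a) with the block-matrix norm inequality of Lemma \ref{le2.2}, choosing the substitutions so that the resulting diagonal blocks produce $AA^{*}X+XBB^{*}$ and the off-diagonal blocks naturally package the operators $XBA^{*}$ and $BA^{*}X$ together with their adjoints.

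Concretely, in Lemma \ref{le2.2} I would take $A_{1}=A^{*}$, $A_{2}=B^{*}$, with the central operator equal to $X$ (so that $A_{1}XA_{2}^{*}=A^{*}XB$), and $B_{1}=e^{i\theta}B^{*}$, $Y=X^{*}$, $B_{2}=A^{*}$ (so that $B_{1}YB_{2}^{*}=e^{i\theta}B^{*}X^{*}A$). A direct computation of the right-hand side of \eqref{eq2.1} shows that the resulting block matrix $M(\theta)$ has $(1,1)$ entry $AA^{*}X+XBB^{*}$ and $(2,2)$ entry its adjoint $BB^{*}X^{*}+X^{*}AA^{*}$, while the $(1,2)$ and $(2,1)$ entries simplify to $XBA^{*}+e^{i\theta}(XBA^{*})^{*}$ and $e^{-i\theta}(BA^{*}X)+(BA^{*}X)^{*}$ respectively. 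Lemma \ref{le2.2} then gives $2\|A^{*}XB+e^{i\theta}B^{*}X^{*}A\|\le\|M(\theta)\|$.

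Next, I would split $M(\theta)=M_{d}+M_{o}(\theta)$ into its diagonal and off-diagonal parts. The triangle inequality gives $\|M(\theta)\|\le\|M_{d}\|+\|M_{o}(\theta)\|$; since the two diagonal blocks of $M_{d}$ are mutually adjoint, $\|M_{d}\|=\|AA^{*}X+XBB^{*}\|$, while $\|M_{o}(\theta)\|$ equals the maximum of the norms of its two off-diagonal entries (the standard identity for a hollow $2\times2$ block matrix). Taking $\sup_{\theta}$ and applying Lemma \ref{le2.1}(a) to $XBA^{*}$ and $BA^{*}X$ individually, the phase in the $(2,1)$ entry absorbs into a global unimodular factor (since $\|e^{-i\theta}S+S^{*}\|=\|S+e^{i\theta}S^{*}\|$), and one obtains $\sup_{\theta}\|M_{o}(\theta)\|=\max(2\omega(XBA^{*}),2\omega(BA^{*}X))$, which by Lemma \ref{le2.1}(b) equals $2\omega\bigl(\mathrm{diag}(XBA^{*},BA^{*}X)\bigr)$. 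Combined with $\sup_{\theta}2\|A^{*}XB+e^{i\theta}B^{*}X^{*}A\|=4\omega(A^{*}XB)$ from Lemma \ref{le2.1}(a), this yields $4\omega(A^{*}XB)\le\|AA^{*}X+XBB^{*}\|+2\omega\bigl(\mathrm{diag}(XBA^{*},BA^{*}X)\bigr)$, and dividing by $4$ is exactly \eqref{eq 333}.

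The main obstacle is guessing the right substitutions in Lemma \ref{le2.2} that simultaneously make the $(1,2)$ and $(2,1)$ entries of the form $T+e^{i\theta}T^{*}$ (up to a global phase) for the two operators $T=XBA^{*}$ and $T=BA^{*}X$ appearing in the block diagonal on the right-hand side of \eqref{eq 333}; once those substitutions are pinned down, everything else is routine block-matrix bookkeeping together with a standard supremum manipulation.
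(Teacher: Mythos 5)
Your proposal is correct and follows essentially the same route as the paper's proof: both apply Lemma \ref{le2.1}(a) together with Lemma \ref{le2.2}, split the resulting block matrix into diagonal and off-diagonal parts, identify the off-diagonal suprema as $2\omega(XBA^{*})$ and $2\omega(BA^{*}X)$, and combine via Lemma \ref{le2.1}(b). The only difference is cosmetic: the paper distributes the phase as $A_{1}=B_{2}=e^{i\theta}A^{*}$ while you place it in $B_{1}=e^{i\theta}B^{*}$, which yields the same matrix up to unimodular factors and the identical bound.
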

\begin{proof}
Applying the first  inequality in Lemma \ref{le2.1} (a) and by letting $A_{1} = B_{2} = e^{i\theta}A^{*}$, $A_{2} = B_{1} = B^{*}$ and $Y = X^{*}$ in inequality \eqref{eq2.1}, we have
\begin{align*}
\omega(A^{*}XB) = & \sup_{\theta\in \mathbb{R}}\big\|\rm{Re}(e^{i\theta}A^{*}XB)\big\| \\& = \frac{1}{2} \sup_{\theta\in \mathbb{R}}\big\|e^{i\theta}A^{*}XB+e^{-i\theta}B^{*}X^{*}A\big\|\\
&\leq \frac{1}{4} \sup_{\theta\in \mathbb{R}}\left\|\begin{bmatrix}
AA^{*}X+XBB^{*}&e^{-i\theta}AB^{*}X^{*}+e^{i\theta}XBA^{*}\\
e^{i\theta}BA^{*}X+e^{-i\theta}X^{*}AB^{*}&BB^{*}X^{*}+X^{*}AA^{*}
\end{bmatrix}\right\|\\
&\leq \frac{1}{4} \sup_{\theta\in \mathbb{R}}\left\|
\begin{bmatrix}
AA^{*}X+XBB^{*}&0\\
0&BB^{*}X^{*}+X^{*}AA^{*}
\end{bmatrix}
\right\|\\
\quad + &\frac{1}{4} \sup_{\theta\in \mathbb{R}}\left\|
\begin{bmatrix}
0&e^{i\theta}(XBA^{*}+e^{-2i\theta}AB^{*}X^{*}\\
e^{i\theta}(BA^{*}X+e^{-2i\theta}X^{*}AB^{*}&0
\end{bmatrix}
\right\|\\
&=\frac{1}{4}\big\|AA^{*}X+XBB^{*}\big\|\\
\qquad+ & \frac{1}{4} \sup_{\theta\in \mathbb{R}} \left(\max\big\{\big\|XBA^{*}+e^{-2i\theta}AB^{*}X^{*}\big\|,
\big\|BA^{*}X+e^{-2i\theta}X^{*}AB^{*}\big\|\big\}\right)
\end{align*}
Using the second equality in Lemma \ref{le2.1} (a), (b), respectively, we deduce the desired inequality \eqref{eq 333}. 
\end{proof}

\begin{remark}\label{re2.10}
By letting $X=I$ in the inequality \eqref{eq 333}, and by using Lemma \ref{le2.1} (b), it is easy to see that the inequality \eqref{eq 333} generalizes inequality \eqref{Sattari} for $p = 1$.
\end{remark}

\begin{example}
Taking $A= \begin{bmatrix}
1&2\\
3&0
\end{bmatrix}$, $B= \begin{bmatrix}
3&4\\
1&5
\end{bmatrix}$ and $X= \begin{bmatrix}
1&2\\
0&1
\end{bmatrix}$. By an easy computation, we find that
\begin{align*} 
\frac{1}{2} \| A^{*} |X^{*}| A + B^{*} |X| B \| \approx 59.5407,
\end{align*} 
\[
\frac{1}{4}\|AA^{*}X+XBB^{*}\|+\frac{1}{2}\omega(\begin{bmatrix}
XBA^{*}&0\\
0& BA^{*}X
\end{bmatrix}) \approx57.7024
\]
and $\omega (A^{*} X B) \approx 42.2677$. This show that the inequality \eqref{eq 333}, in this example,  provides an improvement of the inequality \eqref{Shebrawi} for $p = 1$. 
\end{example}

Using Theorem \ref{th2.7}, we get the following result which inequality \eqref{eq ba2} was obtained in \cite{Bakh}.

\begin{corollary}\label{co2.10}
Let $A \in \mathcal{B}(\mathcal{H})$, $A = U|A|$ be the polar decomposition of $A$, and  $f,g$ be two non-negative continuous functions on $[0, \infty)$ such that $f(x)g(x)=x$ $(x\geq0)$ and let $\tilde{A}_{f,g}=f(|A|)Ug(|A|)$ be generalize the Aluthge transform of $A$. . Then for all non-negative and increasing convex function $h$ on $[0,\infty)$, we have
\begin{equation}\label{eq ba2}
h\big(\omega(A)\big) \leq \frac{1}{4} \left\| h\big(f^{2}(|A|)\big) + h\big(g^{2}(|A|)\big)\right\| + \frac{1}{2}  h\big(\omega (\tilde{A}_{f,g})\big). 
\end{equation}
\end{corollary}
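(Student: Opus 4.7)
The plan is to specialize Theorem~\ref{th2.7} to a triple of operators built from the polar factors of $A = U|A|$, and then to promote the resulting linear inequality to its convex $h$-form by means of Lemma~\ref{eq auj}. The algebraic workhorse is the intertwining identity
\[
U\,p(|A|) \;=\; p(|A^{*}|)\,U,
\]
valid for every continuous function $p$. One derives it from $U|A|=|A^{*}|U$, a direct consequence of $|A^{*}|^{2}=AA^{*}=U|A|^{2}U^{*}$ together with the partial isometry relation $U^{*}U|A|=|A|$, by polynomial approximation.

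With this identity in hand I would apply inequality~\eqref{eq 333} with the substitutions $A \mapsto g(|A^{*}|)$, $B \mapsto f(|A|)$, $X \mapsto U$. The left-hand side collapses through
\[
g(|A^{*}|)\,U\,f(|A|) \;=\; g(|A^{*}|)\,f(|A^{*}|)\,U \;=\; |A^{*}|\,U \;=\; A,
\]
and so becomes $\omega(A)$. For the first summand on the right,
\[
g^{2}(|A^{*}|)\,U+U\,f^{2}(|A|) \;=\; U\bigl(g^{2}(|A|)+f^{2}(|A|)\bigr),
\]
which has norm at most $\|f^{2}(|A|)+g^{2}(|A|)\|$ because $U$ is a contraction. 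For the block-diagonal term, the same identity gives
\[
B A^{*} X \;=\; f(|A|)\,g(|A^{*}|)\,U \;=\; f(|A|)\,U\,g(|A|) \;=\; \tilde{A}_{f,g},
\]
while $X B A^{*} = U f(|A|)\,g(|A^{*}|)$ factors as $U\tilde{A}_{f,g}U^{*}$; since $\omega(V T V^{*})\le\omega(T)$ for every contraction $V$, Lemma~\ref{le2.1}(b) collapses the block diagonal to $\omega(\tilde{A}_{f,g})$. Theorem~\ref{th2.7} therefore delivers
\[
\omega(A) \;\le\; \tfrac{1}{4}\|f^{2}(|A|)+g^{2}(|A|)\| \;+\; \tfrac{1}{2}\omega(\tilde{A}_{f,g}).
\]

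To reach \eqref{eq ba2} I rewrite the right-hand side as the genuine convex combination $\tfrac{1}{2}\bigl(\tfrac{1}{2}\|f^{2}(|A|)+g^{2}(|A|)\|\bigr)+\tfrac{1}{2}\omega(\tilde{A}_{f,g})$ and invoke the convexity and monotonicity of $h$ to obtain
\[
h\bigl(\omega(A)\bigr) \;\le\; \tfrac{1}{2}h\bigl(\tfrac{1}{2}\|f^{2}(|A|)+g^{2}(|A|)\|\bigr) + \tfrac{1}{2}h\bigl(\omega(\tilde{A}_{f,g})\bigr).
\]
Since $h$ is non-negative, increasing and continuous, spectral mapping gives $h(\|T\|)=\|h(T)\|$ for every positive $T$, so the first summand equals $\tfrac{1}{2}\bigl\|h\bigl(\tfrac{f^{2}(|A|)+g^{2}(|A|)}{2}\bigr)\bigr\|$; Lemma~\ref{eq auj} with $\nu=\tfrac{1}{2}$ then dominates this by $\tfrac{1}{4}\|h(f^{2}(|A|))+h(g^{2}(|A|))\|$, completing the proof of \eqref{eq ba2}.

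I expect the main obstacle to lie in the choice of substitution in Theorem~\ref{th2.7}. The naive guess $A\mapsto f(|A|)$, $B\mapsto g(|A|)$, $X\mapsto U$ forces $\omega(\tilde{A}_{f,g})$ onto the left rather than $\omega(A)$, giving an inequality in the wrong direction; replacing $f(|A|)$ by $g(|A^{*}|)$, and then at each step using the intertwining identity to push $|A^{*}|$-factors back across $U$, is the manoeuvre that lands $\omega(A)$ on the left and collapses all $|A^{*}|$-dependence in the right-hand side to the symmetric $|A|$-expressions demanded by \eqref{eq ba2}.
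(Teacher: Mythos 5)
Your overall route is the same as the paper's: specialize Theorem~\ref{th2.7} to the polar factors of $A$ and then upgrade the linear inequality via convexity of $h$, the identity $h(\|T\|)=\|h(T)\|$ for positive $T$, and Lemma~\ref{eq auj} at $\nu=\tfrac12$. The paper substitutes $A^{*}=Ug(|A|)$, $X=U$, $B=U^{*}f(|A|)$, while you substitute $A\mapsto g(|A^{*}|)$, $B\mapsto f(|A|)$, $X\mapsto U$ and move functions across $U$ with the intertwining identity $Up(|A|)=p(|A^{*}|)U$, which you justify correctly. Your handling of the left-hand side ($g(|A^{*}|)Uf(|A|)=A$) and of the first summand ($g^{2}(|A^{*}|)U+Uf^{2}(|A|)=U(f^{2}(|A|)+g^{2}(|A|))$, norm at most $\|f^{2}(|A|)+g^{2}(|A|)\|$) is sound, and in fact cleaner than the paper's own first step, which inserts $UU^{*}$ through the identity $\omega(A)=\omega(Ug(|A|)UU^{*}f(|A|))$ without justification.

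However, your block-diagonal step contains a genuine error. With your substitution, the entry $XBA^{*}$ of Theorem~\ref{th2.7} is $Uf(|A|)g(|A^{*}|)=f(|A^{*}|)Ug(|A^{*}|)$, whereas $U\tilde{A}_{f,g}U^{*}=Uf(|A|)g(|A^{*}|)\,UU^{*}$: the two differ by right multiplication by the projection $UU^{*}$, and since $U$ is only a partial isometry they need not coincide. Worse, the inequality you actually need, $\omega(XBA^{*})\le\omega(\tilde{A}_{f,g})$, is false in general: take $A=\begin{bmatrix} 0&1\\ 0&0\end{bmatrix}$, $f(t)=t$, $g(t)=1$ (admissible, since $f,g\ge0$ and $fg=t$). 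Then $U=A$, $|A|=\begin{bmatrix} 0&0\\ 0&1\end{bmatrix}$, $|A^{*}|=\begin{bmatrix} 1&0\\ 0&0\end{bmatrix}$, so $\tilde{A}_{f,g}=|A|U=0$ while $XBA^{*}=U|A|=A$, giving $\omega(XBA^{*})=\tfrac12>0=\omega(\tilde{A}_{f,g})$; thus the claimed collapse of the block diagonal to $\omega(\tilde{A}_{f,g})$ fails (your contraction lemma $\omega(VTV^{*})\le\omega(T)$ is true, but the identity feeding it is not). Your argument is valid when $g(0)=0$, since then $g(|A^{*}|)$ vanishes on $\ker U^{*}=\ker|A^{*}|$ and $XBA^{*}UU^{*}=XBA^{*}$, or when $U$ is unitary (e.g.\ $A$ invertible), but the stated corollary concerns arbitrary $A$, so an additional reduction is needed to cover the general case. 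For what it is worth, the paper's own proof stumbles on exactly the same degenerate configuration (its inserted $UU^{*}$ identity also fails for this $A,f,g$), but that does not repair your step: as written, the passage from Theorem~\ref{th2.7} to the term $\tfrac12\,\omega(\tilde{A}_{f,g})$ is the gap.
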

\begin{proof}
Since 
\begin{align*}
\omega(A) = \omega( Ug(|A|)f(|A|)) = \omega(Ug(|A|UU^{*}f(|A|)).
\end{align*}
If we take $A^{*}=Ug(|A|)$, $X=U$ and $B=U^{*}f(|A|)$ in \eqref{eq 333}, we get
\begin{align*}
\omega(A) \leq \frac{1}{4} \big\| \big(f^{2}(|A|) + g^{2}(|A|)\big) U \| + \frac{1}{2}  \omega (\tilde{A}_{f,g}). 
\end{align*} 
By the fact that $\|U\| = 1$ and convexity of $h$, we obtain \eqref{eq ba2}.
\end{proof}
\begin{theorem}\label{th2.11}
Let $A,B,X\in \mathcal{B}(\mathcal{H})$. Then
\begin{align*}
\omega(A^{*}XB+B^{*}XA) &\leq \left(\frac{1}{2} \big(\|A\|^{2} + \|B\|^{2}\big) + \|AB^{*}\|\right) \omega(X) \\ &\leq (\|A\|\|B\|+\|AB^{*}\|)\omega(X)
\end{align*}
\end{theorem}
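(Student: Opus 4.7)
The plan is to reduce to bounding a norm of a self-adjoint operator via Lemma \ref{le2.1}(a), and then apply the spectral-radius inequality of Lemma \ref{th Abu}. Since $\omega(T)=\sup_{\theta}\|\operatorname{Re}(e^{i\theta}T)\|$, a direct calculation yields
\[
\operatorname{Re}\!\bigl(e^{i\theta}(A^{*}XB+B^{*}XA)\bigr)\;=\;A^{*}R_{\theta}B+B^{*}R_{\theta}A, \qquad R_{\theta}:=\operatorname{Re}(e^{i\theta}X),
\]
with $R_{\theta}$ self-adjoint and $\|R_{\theta}\|\leq \omega(X)$. Hence the task reduces to bounding $\|A^{*}RB+B^{*}RA\|$ uniformly in self-adjoint $R$ with $\|R\|\leq \omega(X)$.

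Since $A^{*}RB+B^{*}RA=A^{*}RB+(A^{*}RB)^{*}$ is self-adjoint, its norm equals its spectral radius. Applying Lemma \ref{th Abu} with $A_{1}=A^{*}$, $B_{1}=RB$, $A_{2}=B^{*}$, $B_{2}=RA$ gives
\[
\|A^{*}RB+B^{*}RA\|\;\leq\;\tfrac12\bigl(\omega(RBA^{*})+\omega(RAB^{*})\bigr)+\tfrac12\sqrt{(\omega(RBA^{*})-\omega(RAB^{*}))^{2}+4\|RBB^{*}\|\,\|RAA^{*}\|}.
\]
The elementary inequality $\sqrt{u^{2}+v^{2}}\leq u+v$ for $u,v\geq 0$ collapses the right-hand side to $\max\bigl(\omega(RBA^{*}),\omega(RAB^{*})\bigr)+\sqrt{\|RBB^{*}\|\,\|RAA^{*}\|}$. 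Combining this with the crude estimates $\omega(RBA^{*}),\omega(RAB^{*})\leq\|R\|\|AB^{*}\|$ (using $\|BA^{*}\|=\|AB^{*}\|$) and $\|RCC^{*}\|\leq\|R\|\|C\|^{2}$ produces
\[
\|A^{*}RB+B^{*}RA\|\;\leq\;\|R\|\bigl(\|AB^{*}\|+\|A\|\|B\|\bigr).
\]
Taking the supremum over $\theta$ and using $\|R_{\theta}\|\leq\omega(X)$ gives the sharper bound $\omega(A^{*}XB+B^{*}XA)\leq(\|A\|\|B\|+\|AB^{*}\|)\omega(X)$; the other inequality in the statement then follows from the AM-GM estimate $\|A\|\|B\|\leq\tfrac12(\|A\|^{2}+\|B\|^{2})$.

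The key subtlety lies in the pairing of operators in Lemma \ref{th Abu}: the self-adjoint factor $R$ must sit on only one side of each pair so that the mixed products $B_{1}A_{2}$ and $B_{2}A_{1}$ become $RBB^{*}$ and $RAA^{*}$, contributing $\|A\|\|B\|$ through the square-root term, while $B_{1}A_{1}$ and $B_{2}A_{2}$ become $RBA^{*}$ and $RAB^{*}$, supplying the refinement $\|AB^{*}\|$. Alternative groupings fail to produce the $\|AB^{*}\|$ term and only recover the weaker bound $2\|A\|\|B\|\omega(X)$.
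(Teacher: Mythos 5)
Your proof is correct, but it takes a genuinely different route from the paper's. Both arguments begin identically---Lemma \ref{le2.1}(a), the identity $\mathrm{Re}\big(e^{i\theta}(A^{*}XB+B^{*}XA)\big)=A^{*}\,\mathrm{Re}(e^{i\theta}X)\,B+B^{*}\,\mathrm{Re}(e^{i\theta}X)\,A$, and the bound $\|\mathrm{Re}(e^{i\theta}X)\|\leq\omega(X)$---but where you invoke the spectral-radius estimate of Lemma \ref{th Abu} (legitimately, since $A^{*}RB+B^{*}RA$ is self-adjoint, so its norm equals its spectral radius), the paper instead applies Kittaneh's $2\times 2$ norm inequality (Lemma \ref{le2.2}) with $A_{1}=B_{2}=A^{*}$, $A_{2}=B_{1}=B^{*}$, $X=Y=\mathrm{Re}(e^{i\theta}X)$, and splits the resulting operator matrix into diagonal and off-diagonal parts, the former contributing $\tfrac{1}{2}(\|A\|^{2}+\|B\|^{2})\omega(X)$ and the latter $\|AB^{*}\|\omega(X)$. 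The two routes are not equivalent in strength. The paper's splitting yields only the first displayed bound, and its closing remark that the second inequality ``follows by AM-GM'' runs in the wrong direction: AM-GM gives $\|A\|\|B\|\leq\tfrac{1}{2}(\|A\|^{2}+\|B\|^{2})$, so the middle comparison in the displayed chain is actually false whenever $\|A\|\neq\|B\|$ (the two bounds should be read as independent upper bounds, with yours the sharper one). Your argument, by contrast, proves the stronger bound $(\|A\|\|B\|+\|AB^{*}\|)\omega(X)$ directly---the steps $\sqrt{u^{2}+v^{2}}\leq u+v$, $\omega(RBA^{*}),\,\omega(RAB^{*})\leq\|R\|\|AB^{*}\|$, and $\sqrt{\|RBB^{*}\|\,\|RAA^{*}\|}\leq\|R\|\|A\|\|B\|$ all check out---and then recovers the first bound by AM-GM applied in the correct direction. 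So your proof is not merely valid: it establishes both stated upper bounds, including the one that genuinely refines Hirzallah's inequality $\omega(A^{*}XB+B^{*}XA)\leq 2\|A\|\|B\|\omega(X)$, which the paper's own proof, as written, does not deliver.
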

\begin{proof}
By using the first equality in Lemma \ref{le2.1} (a) and the fact that $\rm {Re}(e^{i\theta}(A^{*}XB+B^{*}XA))=A^{*}{\rm Re}(e^{i\theta}X)B+B^{*}{\rm Re}(e^{i\theta}X)A$ and  putting $A_{1}=B_{2}=A^{*}$, $X=Y={\rm Re}(e^{i\theta}X)$ and $A_{2}=B_{1}=B^{*}$ in inequality \eqref{eq2.1}, we get
\begin{align*}
&\sup_{\theta\in \mathbb{R}} \big\|{\rm Re}(e^{i\theta}(A^{*}XB+B^{*}XA))\big\|\\
\quad \leq &\frac{1}{2} \sup_{\theta\in \mathbb{R}} \left \|
\begin{bmatrix}
AA^{*}{\rm Re}(e^{i\theta}X)+{\rm Re}(e^{i\theta}X)BB^{*}& AB^{*}{\rm Re}(e^{i\theta}X)+{\rm Re}(e^{i\theta}X)BA^{*}\\
BA^{*}{\rm Re}(e^{i\theta}X)+{\rm Re}(e^{i\theta}X)AB^{*}&BB^{*}{\rm Re}(e^{i\theta}X)+{\rm Re}(e^{i\theta}X)AA^{*}
\end{bmatrix}
\right\|\\
\quad \leq &\frac{1}{2} \sup_{\theta\in \mathbb{R}} \left\|
\begin{bmatrix}
AA^{*}{\rm Re}(e^{i\theta}X)+{\rm Re}(e^{i\theta}X)BB^{*}&0\\
0&BB^{*}{\rm Re}(e^{i\theta}X)+{\rm Re}(e^{i\theta}X)AA^{*}
\end{bmatrix}
\right\|\\
\qquad +&\frac{1}{2} \sup_{\theta\in \mathbb{R}} \left\|
\begin{bmatrix}
0&AB^{*}{\rm Re}(e^{i\theta}X)+{\rm Re}(e^{i\theta}X)BA^{*}\\
BA^{*}{\rm Re}(e^{i\theta}X)+{\rm Re}(e^{i\theta}X)AB^{*}&0
\end{bmatrix}
\right\|
\end{align*}
Using the first equality in Lemma \ref{le2.1} (a), we obtain
\begin{align*}
\omega(A^{*}XB+B^{*}XA)\leq \frac{1}{2}(\|A\|^{2}+\|B\|^{2})\omega(X)+\|AB^{*}\|\omega(X)
\end{align*}
The second inequality, follows by AM-GM inequality.
\end{proof}

The above theorem is a refinement of the following numerical radius inequality that was proved in \cite{Hirzallah},
\begin{align*}
\omega(A^{*}XB+B^{*}XA) \leq 2\|A\|\|B\|\omega(X).
\end{align*}

The following lemma is due to Kittaneh \cite{Kittaneh 0}

\begin{lemma}\label{kittan}
Let $A, B \in \mathcal{B}(\mathcal{H})$ such that $|A|B=B^{*}|A|$. If $f$ and $g$ are nonnegative continuous function on $[0, \infty)$ satisfying $f(t)g(t)=t$ $(t\geq0)$, then for any vectors $x, y \in \mathcal{H}$
\begin{align*}
  |\langle AB x,y\rangle| \leq r(B) \|f(|A|)x\|  \|g(|A^{*}|)y\|. 
\end{align*} 
\end{lemma}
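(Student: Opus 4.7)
The plan is to combine the polar decomposition $A = U|A|$ with the intertwining hypothesis $|A|B = B^*|A|$ to rewrite $\langle ABx, y\rangle$ so that $B$ ends up sandwiched between two copies of $|A|^{1/2}$ (a configuration that turns out to be self-adjoint), and then to apply the Cauchy--Schwarz inequality after splitting $|A| = f(|A|)g(|A|)$.

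First, writing $A = U|A|$ and inserting the hypothesis gives
$\langle ABx, y\rangle = \langle U|A|Bx, y\rangle = \langle UB^*|A|x, y\rangle = \langle |A|x, BU^*y\rangle.$
Since $f(|A|)$ and $g(|A|)$ are commuting self-adjoint operators with product $|A|$, the identity $|A| = g(|A|)f(|A|)$ together with self-adjointness lets me rewrite
$\langle ABx, y\rangle = \langle f(|A|)x,\; g(|A|)\,BU^*y\rangle,$
so by Cauchy--Schwarz
$|\langle ABx, y\rangle| \leq \|f(|A|)x\|\;\|g(|A|)BU^*y\|.$
It then suffices to prove the key estimate $\|g(|A|)BU^*y\| \leq r(B)\,\|g(|A^*|)y\|$, after which the result follows, since $g(|A^*|) = Ug(|A|)U^*$ gives $\|g(|A^*|)y\| = \|g(|A|)U^*y\|$ via the partial isometry property of $U$ on the relevant subspace.

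The heart of the argument is a hidden self-adjointness: from $|A|B = B^*|A|$ one has $B^* = |A|B|A|^{-1}$, whence a direct computation shows
$(|A|^{1/2}B|A|^{-1/2})^* = |A|^{-1/2}B^*|A|^{1/2} = |A|^{-1/2}\cdot|A|B|A|^{-1}\cdot|A|^{1/2} = |A|^{1/2}B|A|^{-1/2}.$
Being self-adjoint and similar to $B$, the operator $|A|^{1/2}B|A|^{-1/2}$ satisfies $\||A|^{1/2}B|A|^{-1/2}\| = r(|A|^{1/2}B|A|^{-1/2}) = r(B)$. In the symmetric case $f = g = t^{1/2}$ this instantly gives $\||A|^{1/2}BU^*y\| \leq r(B)\||A|^{1/2}U^*y\|$, hence the claim. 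The general case with $g$ arbitrary is handled by the analogous similarity $g(|A|)Bg(|A|)^{-1}$ paired with a functional-calculus argument exploiting the iterated identity $|A|B^n = (B^*)^n|A|$ (which follows at once by induction from the hypothesis) together with the spectral radius formula $r(B) = \lim_n\|B^n\|^{1/n}$.

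The main obstacle I anticipate is precisely this last step: for general $g$ the auxiliary operator $g(|A|)Bg(|A|)^{-1}$ need not be self-adjoint (one checks that its adjoint equals $f(|A|)Bf(|A|)^{-1}$), so the identification of its operator norm with $r(B)$ must be extracted indirectly rather than from the $f=g$ trick above. A secondary, routine technical point is the possible non-invertibility of $|A|$, which is dealt with in the standard way by replacing $|A|$ with $|A|+\varepsilon I$ and letting $\varepsilon\to 0^+$ at the end.
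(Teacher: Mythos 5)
You should first note that the paper offers no proof of this lemma at all: it is quoted verbatim from Kittaneh's 1988 paper \cite{Kittaneh 0}, so there is no in-paper argument to match yours against; your proposal has to stand on its own, and it does not. Your opening reduction ($\langle ABx,y\rangle=\langle |A|x,BU^{*}y\rangle\le\|f(|A|)x\|\,\|g(|A|)BU^{*}y\|$) is sound, and for $f=g=t^{1/2}$ the self-adjointness of $|A|^{1/2}B|A|^{-1/2}$ is indeed the right mechanism. But both issues you dismiss as technical are fatal as written. The $\varepsilon$-regularization does not work: if $|A|B=B^{*}|A|$ and also $(|A|+\varepsilon I)B=B^{*}(|A|+\varepsilon I)$, then subtracting forces $B=B^{*}$, so replacing $|A|$ by $|A|+\varepsilon I$ destroys the hypothesis except in a trivial case. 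The standard way to avoid invertibility is the power trick you only gesture at: from $|A|B^{n}=(B^{n})^{*}|A|$ one gets $\||A|^{1/2}Bz\|^{2}=\langle |A|z,B^{2}z\rangle\le\||A|^{1/2}z\|\,\||A|^{1/2}B^{2}z\|$, which iterates to $\||A|^{1/2}Bz\|\le r(B)\,\||A|^{1/2}z\|$ via $r(B)=\lim_{n}\|B^{2^{n}}\|^{2^{-n}}$, with no inverse of $|A|$ anywhere.

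The deeper problem is the step you flag as the ``main obstacle'': it is not merely hard, it is false, and in fact the lemma as printed fails for unbalanced $f,g$ and $x\neq y$. Take $A=|A|=|A^{*}|=\begin{pmatrix}1&0\\0&\epsilon\end{pmatrix}$ (so $U=I$) and $B=\begin{pmatrix}0&1\\\epsilon^{-1}&0\end{pmatrix}$; then $|A|B=B^{*}|A|=\begin{pmatrix}0&1\\1&0\end{pmatrix}$, so the hypothesis holds, and $r(B)=\epsilon^{-1/2}$. Your key estimate with $g(t)=t$, $f(t)=1$ would assert $\||A|B|A|^{-1}\|=\|B^{*}\|=\epsilon^{-1}\le r(B)=\epsilon^{-1/2}$, which fails. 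Worse, with $f(t)=t^{0.1}$, $g(t)=t^{0.9}$, $x=e_{1}$, $y=e_{2}$, the lemma's left side is $|\langle |A|Be_{1},e_{2}\rangle|=1$ while the right side is $\epsilon^{-1/2}\cdot 1\cdot\epsilon^{0.9}=\epsilon^{0.4}\to 0$, so no ``indirect extraction'' of the missing norm identity can exist: only the balanced case $f=g=t^{1/2}$ (where your similarity is genuinely self-adjoint, and where the example above gives equality) is provable at this level of generality. So your instinct about where the difficulty sits was exactly right, but the correct conclusion is that the general-$g$ claim must be abandoned, not proved; note also that the paper's later use of the lemma in Theorem \ref{th alpha}, with general $f,g$ and the distinct vectors $Bx$ and $x$, lives precisely in the regime where the printed statement breaks down.
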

\begin{theorem}  \label{th alpha}
Let $A, B, X \in \mathcal{B}(\mathcal{H})$ satisfying $|A^{*}|X=X^{*}|A^{*}|$ and  $f,g$ be two non-negative continuous functions on $[0, \infty)$ such that $f(t)g(t)=t$ $(t\geq0)$. If $h$ is a nonnegative increasing convex function on $[0,\infty)$, then
\begin{align*}
h\big(\omega^{2}(A^{*}XB)\big) \leq \big\| (1-\nu) h\big(r^{2}(X) (B^{*}f^{2}(|A^{*}|)B)^{\frac{1}{1-\nu}}\big) + \nu h\big( r^{2}(X) g^{\frac{2}{\nu}}(|A|)\big) \big\| 
\end{align*}
for all  $0<\nu<1$. Moreover, in special case for $r(X) \leq 1$, we have
\begin{align*}
h\big(\omega^{2}(A^{*}XB)\big) \leq r^{2}(X) \big\| (1-\nu) h\big((B^{*}f^{2}(|A^{*}|)B)^{\frac{1}{1-\nu}}\big) + \nu h\big( g^{\frac{2}{\nu}}(|A|)\big) \big\|
\end{align*}
\end{theorem}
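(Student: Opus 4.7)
The plan is to reduce everything to a scalar estimate on $|\langle A^*XBx,x\rangle|$ by using the commutation relation $|A^*|X=X^*|A^*|$ to invoke Lemma~\ref{kittan}. Choosing the ``$A$'' there to be $A^*$ and the ``$B$'' there to be $X$, and evaluating on $u=Bx$, $v=x$, I get, for every unit vector $x\in\mathcal{H}$,
\begin{align*}
|\langle A^*XBx,x\rangle| \leq r(X)\,\|f(|A^*|)Bx\|\,\|g(|A|)x\|.
\end{align*}
Squaring and re-expressing the norms as inner products of positive operators, this becomes
\begin{align*}
|\langle A^*XBx,x\rangle|^2 \leq r^2(X)\,\langle B^*f^2(|A^*|)Bx,x\rangle\,\langle g^2(|A|)x,x\rangle.
\end{align*}

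The next step is to push the exponents inside. Applying inequality~\eqref{eq 002} to the concave functions $t\mapsto t^{1-\nu}$ and $t\mapsto t^{\nu}$ yields
\begin{align*}
\langle B^*f^2(|A^*|)Bx,x\rangle &\leq \bigl\langle (B^*f^2(|A^*|)B)^{\frac{1}{1-\nu}}x,x\bigr\rangle^{1-\nu},\\
\langle g^2(|A|)x,x\rangle &\leq \bigl\langle g^{\frac{2}{\nu}}(|A|)x,x\bigr\rangle^{\nu}.
\end{align*}
Writing $r^2(X)=(r^2(X))^{1-\nu}(r^2(X))^{\nu}$ and absorbing each piece into the corresponding factor, Young's inequality $a^{1-\nu}b^{\nu}\leq (1-\nu)a+\nu b$ gives
\begin{align*}
|\langle A^*XBx,x\rangle|^2 \leq \bigl\langle\bigl[(1-\nu)r^2(X)(B^*f^2(|A^*|)B)^{\frac{1}{1-\nu}} + \nu\,r^2(X)g^{\frac{2}{\nu}}(|A|)\bigr]x,x\bigr\rangle.
\end{align*}

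The finish is now routine. Applying the increasing convex $h$, using inequality~\eqref{eq 002} to pull $h$ inside the inner product, and then Lemma~\ref{eq auj} to split the convex combination across the norm, I obtain
\begin{align*}
h\bigl(|\langle A^*XBx,x\rangle|^2\bigr) \leq \bigl\| (1-\nu)h\bigl(r^2(X)(B^*f^2(|A^*|)B)^{\frac{1}{1-\nu}}\bigr) + \nu h\bigl(r^2(X)g^{\frac{2}{\nu}}(|A|)\bigr)\bigr\|.
\end{align*}
Since $h$ is continuous and non-decreasing, taking the supremum over unit $x$ on the left produces $h(\omega^2(A^*XB))$; the right side is independent of $x$, so the main inequality follows. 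The ``moreover'' case when $r(X)\leq 1$ reduces to the main inequality by noting that in this regime $h(r^2(X)\,T)\leq r^2(X)\,h(T)$ on the spectrum of $T$, which allows $r^2(X)$ to be pulled outside both $h$ and the norm.

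The principal bookkeeping obstacle I anticipate is the joint placement of $r^2(X)$ and the exponents in the Jensen/Young step: the factor of $r^2(X)$ must end up \emph{inside} the arguments of $h$ so that Lemma~\ref{eq auj} can be applied to an honest convex combination of positive operators, and only then can one appeal to the homogeneity-like property of $h$ to extract $r^2(X)$ in the special case.
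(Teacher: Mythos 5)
Your proposal matches the paper's proof essentially step for step: Lemma \ref{kittan} with $y=x$, the Jensen-type inequality \eqref{eq 002} applied to the concave powers $t^{1-\nu}$ and $t^{\nu}$, Young's inequality $a^{1-\nu}b^{\nu}\leq(1-\nu)a+\nu b$, and finally Lemma \ref{eq auj}, with your only deviation being that you pull $h$ inside the inner product via \eqref{eq 002} before taking the supremum, whereas the paper takes the supremum first and then uses $h(\|T\|)=\|h(T)\|$ for positive $T$ --- an equivalent reordering. The only caveat is that your sub-homogeneity step $h(r^{2}(X)T)\leq r^{2}(X)h(T)$ for the case $r(X)\leq 1$ requires $h(0)=0$, but this assumption is equally implicit in the paper's unproved ``moreover'' clause (and holds in all the paper's applications, where $h(t)=t^{p}$), so it is not a gap relative to the paper.
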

\begin{proof}
Setting $y=x$ in Lemma \ref{kittan} and using \eqref{eq 002} for the concave function $t^{\nu}$, respectively, we get  
\begin{align*}
 |\langle A^{*}XB x,x\rangle|^{2} & \leq r^{2}(X) \|f(|A^{*}|) Bx\|^{2} \|g(|A|)x\|^{2} \\&  \leq r^{2}(X)  \langle B^{*}f^{2}(|A^{*}|)Bx, x \rangle \langle g^{2}(|A|)x, x \rangle \\ & = r^{2}(X)  \left\langle \left(\big(B^{*}f^{2}(|A|)B\big)^{\frac{1}{1-\nu}}\right)^{1-\nu} x, x\right\rangle  \left\langle \big(\left(g^{2}(|A|)\right)^{\frac{1}{\nu}}\big)^{\nu} x, x\right\rangle  \\ & \leq r^{2}(X) \left\langle \left(B^{*}f^{2}(|A^{*}|)B\right)^{\frac{1}{1-\nu}} x, x\right\rangle^{1-\nu}  \left\langle (g^{2}(|A|))^{\frac{1}{\nu}} x, x\right\rangle^{\nu}\\ & \leq r^{2}(X)  \left\langle (1-\nu)\left(B^{*}f^{2}(|A^{*}|)B\right)^{\frac{1}{1-\nu}} + \nu g^{\frac{2}{\nu}}(|A|) x, x \right\rangle.
\end{align*}
Hence by taking the supremum over $x\in \mathcal{H}$, we get
\begin{align*}
\omega^{2}(A^{*} X B) \leq r^{2}(X)  \big\|(1-\nu) \left(B^{*}f^{2}(|A^{*}|)B\right)^{\frac{1}{1-\nu}} + \nu g^{\frac{2}{\nu}}(|A|)\big\|.
\end{align*}
Since $h$ is an increasing convex function, we have
\begin{align*}
h\big(\omega^{2}(A^{*} X B)\big) & \leq h\left(r^{2}(X)  \big\|(1-\nu) \left(B^{*}f^{2}(|A^{*}|)B\right)^{\frac{1}{1-\nu}} + \nu g^{\frac{2}{\nu}}(|A|)\big\|\right) \\ & =\big\| h\left(r^{2}(X)  (1-\nu) \left(B^{*}f^{2}(|A^{*}| )B\right) ^{\frac{1}{1-\nu}}+ \nu g^{\frac{2}{\nu}}(|A|)\right)\big\| \\ & \leq \big\| (1-\nu) h\big(r^{2}(X) (B^{*}f^{2}(|A^{*}|)B)^{\frac{1}{1-\nu}}\big) + \nu h\big( r^{2}(X) g^{\frac{2}{\nu}}(|A|)\big) \big\| 
\end{align*}
where, in the last inequality we used Lemma \ref{eq auj}. 
\end{proof}

Now we present some applications of Theorem \ref{th alpha}.\\ 

Letting $f(t)=t^{1-\nu}$ and $g(t)=t^{\nu}$ for $0< \nu<1$ in Theorem \ref{th alpha} we get

\begin{corollary} \label{eq 0g}
Let $A, B, X \in \mathcal{B}(\mathcal{H})$ satisfying $|A^{*}|X=X^{*}|A^{*}|$. If $h$ is a nonnegative increasing convex function on $[0,\infty)$, then for all  $0<\nu<1$
\begin{align*}
h\big(\omega^{2}(A^{*}XB)\big) \leq \big\|(1-\nu ) h\big(r^{2}(X) (B^{*}|A^{*}|^{2}B)\big) + \nu h\big(r^{2}(X) |A|^{2}\big)\big\|.
\end{align*}
Inparticullar, for $r(X) \leq1$
\begin{align*}
h\big(\omega^{2}(A^{*}XB)\big) \leq  r^{2}(X)  \big\|(1-\nu ) h (B^{*}|A^{*}|^{2}B\big) + \nu h (|A|^{2})\big\|.
\end{align*}
\end{corollary}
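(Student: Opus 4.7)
The plan is to obtain the corollary as an immediate specialisation of Theorem \ref{th alpha} to the pair of power functions $f(t) = t^{1-\nu}$ and $g(t) = t^{\nu}$. These are non-negative and continuous on $[0, \infty)$, and $f(t) g(t) = t^{1-\nu} \cdot t^{\nu} = t$, so the compatibility condition required by Theorem \ref{th alpha} is satisfied. The commutativity-type hypothesis $|A^{*}| X = X^{*} |A^{*}|$ is assumed in the corollary, matching the corresponding hypothesis of the theorem, so Theorem \ref{th alpha} applies verbatim to any non-negative increasing convex $h$ and any $0 < \nu < 1$.

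With this choice, I would simplify the two operator expressions that appear inside $h$ on the right-hand side of Theorem \ref{th alpha}. The $g$-term simplifies cleanly: $g^{2/\nu}(|A|) = (|A|^{\nu})^{2/\nu} = |A|^{2}$, which is exactly the second summand in the corollary. For the $f$-term, $f^{2}(|A^{*}|) = |A^{*}|^{2(1-\nu)}$, and applying the outer power $\frac{1}{1-\nu}$ produces the expression $\bigl(B^{*} |A^{*}|^{2(1-\nu)} B\bigr)^{1/(1-\nu)}$, which the statement identifies with $B^{*}|A^{*}|^{2}B$. Plugging both expressions into the conclusion of Theorem \ref{th alpha} gives the first inequality of the corollary directly.

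For the particular case $r(X) \leq 1$, I would use the elementary fact that a non-negative increasing convex function $h$ on $[0, \infty)$ with $h(0) = 0$ satisfies $h(ct) \leq c\, h(t)$ for every $c \in [0, 1]$ (this follows from $h(ct + (1-c) \cdot 0) \leq c\, h(t) + (1-c) h(0) = c\, h(t)$). Applying this spectrally with $c = r^{2}(X) \leq 1$ to each of the two positive arguments of $h$, I can pull the scalar $r^{2}(X)$ outside $h$ in both summands and then outside the norm by homogeneity, yielding the second inequality. The main obstacle, as far as I can see, is really just bookkeeping — tracking how the outer powers $\frac{1}{1-\nu}$ and $\frac{1}{\nu}$ in Theorem \ref{th alpha} were engineered to cancel precisely against the exponents in $f(t) = t^{1-\nu}$ and $g(t) = t^{\nu}$ — so no new analytic input beyond Theorem \ref{th alpha} is required.
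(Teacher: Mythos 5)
Your route is exactly the paper's own: its printed ``proof'' of this corollary is the one-line specialisation $f(t)=t^{1-\nu}$, $g(t)=t^{\nu}$ in Theorem \ref{th alpha}. But the step you wave through as bookkeeping is a genuine gap, and it does not close. The $g$-term cancels only because nothing is sandwiched between the powers: $g^{2/\nu}(|A|)=|A|^{2}$. The $f$-term does not cancel: Theorem \ref{th alpha} produces $\bigl(B^{*}|A^{*}|^{2(1-\nu)}B\bigr)^{1/(1-\nu)}$, and conjugation by $B$ does not commute with taking powers, so this operator is not $B^{*}|A^{*}|^{2}B$ unless, say, $B$ is unitary or commutes with $|A^{*}|$. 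Nor is the replacement even an upper bound: with $C=I$, $B=2I$, $\nu=\tfrac{1}{2}$ one gets $(B^{*}C^{1/2}B)^{2}=16I\not\leq 4I=B^{*}CB$, and since $t^{1/(1-\nu)}$ is not operator monotone, a Hansen-type argument cannot repair this. In fact the corollary as stated is false: take $A=I$, $X=I$, $B=2I$, $h(t)=t$; the hypothesis $|A^{*}|X=X^{*}|A^{*}|$ holds trivially, the left-hand side is $\omega^{2}(A^{*}XB)=\omega^{2}(2I)=4$, while the right-hand side is $\|(1-\nu)\cdot 4I+\nu I\|=4-3\nu<4$ for every $0<\nu<1$. (Theorem \ref{th alpha} itself survives this test, since its bound is $(1-\nu)4^{1/(1-\nu)}+\nu\geq 4$ by weighted AM--GM; it is precisely the illegitimate cancellation that destroys the inequality.) So the correct specialisation must retain $\bigl(B^{*}|A^{*}|^{2(1-\nu)}B\bigr)^{1/(1-\nu)}$; the flaw you inherited is the paper's, but your proposal asserts the false identity rather than proving the stated inequality.

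On the second clause: your reduction of the case $r(X)\leq 1$ to the first inequality is fine as far as it goes, and you rightly noticed that pulling $r^{2}(X)$ through $h$ via $h(ct)\leq c\,h(t)$ requires $h(0)=0$ --- a hypothesis that appears neither in this corollary nor in the ``special case'' clause of Theorem \ref{th alpha}, and which is genuinely needed (with $h(t)=t+1$ and $X=\epsilon I$, the right-hand side of the second display tends to $0$ as $\epsilon\to 0$ while the left-hand side stays $\geq h(0)=1$). So, granting $h(0)=0$, the second inequality would indeed follow from the first; the problem is that the first inequality itself is not a consequence of Theorem \ref{th alpha} in the form you (and the paper) claim.
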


By the convexity $h(t)=t^{p}$ for $p\geq1$ we have

\begin{corollary} \label{eq 00g}
Let $A,B,X\in \mathcal{B}(\mathcal{H})$, then for all  $0<\nu<1$ and $p\geq1$
\begin{align*}
\omega^{2p}(A^{*}XB) \leq r^{2p}(X) \big\|(1-\nu) (B^{*}|A^{*}|^{2}B)^{p} + \nu  |A|^{2p}\big\|.
\end{align*}
\end{corollary}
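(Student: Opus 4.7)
The plan is to obtain the stated inequality as an immediate specialization of Corollary \ref{eq 0g}. The only ingredient needed is a verification that the test function $h(t)=t^{p}$, for $p\geq 1$, meets the hypotheses of that corollary and that it interacts nicely with the scalar factor $r^{2}(X)$ appearing inside.

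First, I would recall that for $p\geq 1$, the map $t\mapsto t^{p}$ is nonnegative, increasing, and convex on $[0,\infty)$. Hence Corollary \ref{eq 0g} (applied with this choice of $h$, and under the implicit hypothesis $|A^{*}|X=X^{*}|A^{*}|$ inherited from that corollary) yields
\begin{align*}
\omega^{2p}(A^{*}XB) = h\bigl(\omega^{2}(A^{*}XB)\bigr)
\leq \bigl\|(1-\nu)\bigl(r^{2}(X)\,B^{*}|A^{*}|^{2}B\bigr)^{p} + \nu\bigl(r^{2}(X)\,|A|^{2}\bigr)^{p}\bigr\|.
\end{align*}

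Next, I would use that $r^{2}(X)$ is a nonnegative scalar and that $(cT)^{p}=c^{p}T^{p}$ for any nonnegative scalar $c$ and any positive operator $T$. Pulling $r^{2p}(X)$ out of both summands and out of the norm completes the identification with the right-hand side of the claimed inequality:
\begin{align*}
\omega^{2p}(A^{*}XB) \leq r^{2p}(X)\,\bigl\|(1-\nu)(B^{*}|A^{*}|^{2}B)^{p} + \nu\,|A|^{2p}\bigr\|.
\end{align*}

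There is essentially no obstacle here; the only point worth flagging is that the statement of the corollary, as written, omits the commutation hypothesis $|A^{*}|X=X^{*}|A^{*}|$ that is required by Lemma \ref{kittan} and hence by Corollary \ref{eq 0g}. I would either carry this hypothesis explicitly or indicate in the proof that it is assumed, and then the argument reduces to the two short steps above.
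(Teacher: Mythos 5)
Your proof is correct and is essentially the paper's own argument: the paper likewise obtains this corollary by applying Corollary \ref{eq 0g} with the convex function $h(t)=t^{p}$, $p\geq 1$, and pulling the scalar $r^{2p}(X)$ out of the norm. Your observation that the commutation hypothesis $|A^{*}|X=X^{*}|A^{*}|$ is tacitly inherited from Corollary \ref{eq 0g} (and is omitted in the corollary's statement) is a fair catch, as the paper drops it there too.
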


In addition, by using Theorem \ref{th alpha} and corollaries \ref{eq 0g} , \ref{eq 00g}  for $X = B = I$, we obtain several generalization of inequality \ref{eq kit2}.

\end{document}